\crefname{equation}{}{} 
\numberwithin{equation}{section}
\titleformat*{\section}{\large\bfseries}
\titleformat*{\subsection}{\bfseries}
\theoremstyle{plain}
\newtheorem{theorem}{Theorem}
\newtheorem{lemma}{Lemma}
\newtheorem{proposition}{Proposition}
\theoremstyle{definition}
\newtheorem{definition}{Definition}
\newtheorem{example}{Example}
\newtheorem{remark}{Remark}
\newtheorem{assumption}{Assumption} 
\newcommand{\od}{\mathrm{d}}
\newcommand{\E}{{\mathbb{E}}}
\renewcommand{\P}{{\mathbb{P}}}
\newcommand{\R}{{\mathbb{R}}}
\newcommand{\N}{{\mathbb{N}}}
\newcommand{\F}{{\mathbb{F}}}
\newcommand{\cB}{\mathcal{B}}
\newcommand{\cF}{\mathcal{F}}
\newcommand{\cC}{\mathcal{C}}
\newcommand{\cN}{\mathcal{N}}
\newcommand{\law}[1]{\mathbb{P}_{#1}}
\newcommand{\ep}{\varepsilon}
\newcommand{\ind}{\mathbbm{1}}
\newcommand{\rplus}{[0, \infty)}
\newcommand{\tmax}{T_{\max}}
\newcommand{\lognormal}{\mathrm{LogNormal}}
\newcommand{\imean}{\bar{\mu}_0}
\newcommand{\ivar}{\bar{\sigma}_0}
\newcommand{\variance}{\mathrm{Var}}
\newcommand{\corr}[1]{{#1}}
\newcommand{\new}[1]{{#1}}
\newcommand{\newer}[1]{{#1}}
\newcommand{\newest}[1]{{#1}}
\newcommand{\changed}[1]{{#1}}
\newcommand{\changedagain}[1]{{#1}}
\numberwithin{equation}{section}
\begin{document}

\title{\large \bf{Mean field stochastic differential equations with a diffusion coefficient with irregular distributional dependence}}

\author{Jani Nykänen$^1$}

\date{}
\maketitle

\begin{abstract}
We study mean field stochastic differential equations with a diffusion coefficient that depends on the distribution function of the unknown process in a discontinuous manner, \corr{which is a type of distribution dependent regime switching.} To determine the distribution function we show \corr{that} under certain conditions these equations can be transformed into SDEs with deterministic coefficients \corr{using a Lamperti-type transformation}. We prove an existence and uniqueness result and consider cases when the uniqueness may fail or a solution exists only for a finite time.
\end{abstract}

{\small{
\vspace{1em}
{\noindent \textbf{Keywords:} mean field stochastic differential equation, discontinuous diffusion coefficient, \newest{Lamperti-type} transformation, existence and non-existence of strong solutions, regime switching  \\
\noindent \textbf{Mathematics Subject Classification:} 60H10, 60H30
}

\tableofcontents

{\noindent
\footnotetext[1]{Department of Mathematics and Statistics, University of
Jyv\"askyl\"a, Finland. \\ \hspace*{1.5em}
  {\tt jani.m.nykanen@jyu.fi}}
}
}}


\section{Introduction}

We study one-dimensional mean field stochastic differential equations of the type 
\begin{equation} \label{eq_very_general_equation}
X_t = x_0 + \int_0^t \sum_{n=1}^\infty \ind_{\left\{ \P(X_s \leq r(s)) \in I_n \right\}} \sigma_n(s, X_s, \law{X_s}) \od B_s + \int_0^t b(s, X_s, \law{X_s}) \od s, \quad t \in \rplus,
\end{equation}
where $B$ is a one-dimensional Brownian motion, $x_0$ is an initial condition independent of $B$, $(I_n)_{n = 1}^\infty$ are pairwise disjoint Borel sets forming a partition of $[0, 1)$ and $r : \rplus \to \R$ is a \corr{Borel} measurable function. The diffusion coefficient function is then 
\begin{equation} \label{eq_type_of_disc}
\sigma(t, x, \mu) = \sum_{n=1}^\infty \ind_{\left\{ \mu((-\infty, r(t)]) \in I_n \right\}} \sigma_n(t, x, \mu),
\end{equation}
where $\mu$ is a Borel probability measure on $\R$. \corr{This is a type of distribution dependent regime switching where the diffusion coefficient is switched to \new{another} one when a deterministic function depending on the law of the unknown process attains certain values. }

These equations are comparable to the equations studied in \cite{paper1}, where the diffusion coefficient function is of the type
\begin{equation} \label{eq_paper1_sde}
\sigma(t, x, \mu) = \sum_{n=1}^\infty \ind_{\left\{ \int_{\R} \left| y - z \right|^p \od \mu(y) \in I_n  \right\}} \sigma_n(t, x, \mu)
\end{equation}
for some fixed $z \in \R$ and exponent $p \geq 2$. \newer{The moment function $t \mapsto \E \left| X_t - z \right|^p$ can be computed by applying Itô's formula to the function $x \mapsto \left| x - z \right|^p$ and then taking the expectation. However, since the map $x \mapsto \ind_{\left\{ x \in (-\infty, r(t)] \right\}}$ is more irregular applying Itô's formula in the same way as in \cite{paper1} is not possible, so to determine the function $t \mapsto \P(X_t \leq r(t))$ in \eqref{eq_very_general_equation} one needs an alternative method.
}

Our approach is to transform \eqref{eq_very_general_equation} into an SDE with deterministic coefficients, because its distribution is \corr{Gaussian and hence} well known. This is made possible by the following observation: supposing that we have a \newer{continuous} transformation $F : \newest{D}  \to \R$, \newest{where the domain $D$ is defined in \eqref{eq_def_D},} that is \newer{strictly monotone} in the second variable, then 
\begin{equation*}
\P(X_t \leq r(t)) = \P(F(t, X_t) \leq F(t, r(t)))
\end{equation*}  
for all $t \in \rplus$. Using the transformation we are able to study similar effects as in \cite{paper1}: we prove an existence and uniqueness theorem, which relies on the monotonicity of the function $t \mapsto \P(X_t \leq r(t))$. \corr{Moreover, we} consider cases when there exists more than one solution and when the existence of a (global) solution fails due to an oscillation effect.

\bigskip

To motivate the equation \eqref{eq_very_general_equation} one can think the SDE \eqref{eq_very_general_equation} as a mean field limit of a large particle system \corr{(supposing first that the diffusion coefficient is approximated with more regular functions)}. Under this interpretation the quantity $\P(X_t \leq r(t))$ expresses the relative amount of "particles" that are below a given reference rate. One can think, for example, asset prices: when the percentage of the assets that are priced below a given rate function hits a given threshold, the magnitude of the volatility changes in a discontinuous manner. 

\bigskip

Stochastic differential equations with irregular coefficients have been studied from various perspectives: Mean field SDEs with a discontinuous drift appear for example in \cite{bauer_irregular_drift, leobacher_discont_drift}, where the discontinuity occurs in the space component, and in \cite{huang_discont_wasserstein} where the drift coefficient is discontinuous in the measure component with respect to the Wasserstein distance. For ordinary SDEs --- that is, equations whose coefficients do not depend on the law of the unknown process --- discontinuous drift has been studied for example in \cite{disc_drift_numeric_2, disc_drift_numeric_1} \newer{from the viewpoint of} numerics.

Mean field SDEs with a diffusion coefficient with relaxed regularity, on the other hand, have been studied in \cite{stannat_mehri_lyapunov, mishura_true_mckean_vlasov}, but the major difference to our setting is that \newer{it is assumed} that the coefficient functions are of the type
\begin{equation} \label{eq_true_mckean_vlasov_struct}
\sigma(t, x, \mu) = \int_{\R^d} \bar{\sigma}(t, x, y) \od \mu (y), \quad b(t, x, \mu) = \int_{\R^d} \bar{b}(t, x, y) \od \mu (y)
\end{equation}
for some functions $\bar{\sigma}$ and $\bar{b}$. In general, \newer{there does not necessarily exist a function $\bar \sigma$ such that \eqref{eq_true_mckean_vlasov_struct} holds for a} function \newer{$\sigma$} of the type \eqref{eq_type_of_disc}. Assuming the structure \eqref{eq_true_mckean_vlasov_struct} it is shown in \cite{zhang_discrete} that under sufficient conditions the coefficient functions can be allowed to be discontinuous in the measure component with respect to the topology of weak convergence. Without the mean field interaction, discontinuity in the diffusion coefficient has been studied for example in \cite{lejay_discontinuous_media}, and with a fractional Brownian motion as the driving process in \cite{garzon_discontinuous_fractional, torres_viitasaari}.

\subsection*{Structure of the article}

This article is organized as follows: In \cref{section_general_setting} we introduce the general setting and the \corr{standing} assumptions \corr{on the coefficient functions}. In \cref{sec_transformation} we show that these assumptions allow us to transform the equation \eqref{eq_very_general_equation} into a new SDE with deterministic coefficient functions. We give examples of equations for which a transformation exists. In \cref{section_existence} we prove a theorem \corr{on} the existence and uniqueness of a strong solution, and consider cases when the solution may only exist for a finite time due to an explosion in the variance. The proof of the existence result is based on a condition that ensures the strict monotonicity of the function $\varphi(\cdot) = \P(X_{\cdot} \leq r(\cdot))$, and violating this condition can result \corr{in} equations that have multiple solutions or no (global) solution, which is the topic of \cref{sec_counter_examples}.

\subsection*{Notation \corr{and conventions}}

We use the following notation and conventions:
\begin{itemize}
    \item $\N = \left\{1, 2, ... \right\}$.
    \item $\inf \emptyset = \infty$ and $\sup \emptyset = -\infty$.
    \item Given a topological space $X$ we denote by $\cB(X)$ the Borel $\sigma$-algebra in $X$, that is, the smallest $\sigma$-algebra containing all open sets in $X$.
    \item $\Phi$ denotes the cumulative \new{distribution} function of the standard normal distribution $\cN(0, 1)$.
    \item $\xi \sim \lognormal(\mu, \ivar^2)$ if and only if $\log(\xi) \sim \cN(\imean, \ivar^2)$.
    \item \corr{Given sets $A, B \in \cB(\R)$ we say that a function $f : A \to B$ is measurable if it is Borel measurable, that is, $(\cB(A), \cB(B))$-measurable.}
    \item \corr{The space $\cC^1(\rplus; \rplus)$ contains all \newer{continuous} functions $f : \rplus \to \rplus$ that are differentiable on $(0, \infty)$ and the derivative can be continuously extended to $0$. }
\end{itemize}


\section{General setting} \label{section_general_setting}

Throughout this article we suppose that $(\Omega, \cF, \P, \F := \left( \cF_t \right)_{t \in \rplus})$ is a stochastic basis satisfying the usual conditions, and we let $B = (B_t)_{t \in \rplus}$ be a \newer{$1$-}dimensional $\F$--Brownian motion. 

\changedagain{We assume} coefficient functions
\begin{align*}
\sigma & : \rplus \times \R \to \R, \\
b & : \rplus \times \R \to \R,
\end{align*}
\changedagain{where $\sigma$ is jointly continuous, and $b$ is continuous on the set $D \subseteq \rplus \times \R$ defined in \eqref{eq_def_D} and jointly measurable elsewhere ($b$ will be restricted to $D$ later). Given} an $\cF_0$-measurable initial condition $x_0$ and a measurable reference function $r : \rplus \to \R$ we study mean field stochastic differential equations of the form
\begin{equation} \label{eq_target_sde}
\begin{cases}
\begin{aligned}
X_t & = x_0 + \int_0^t \left( \sum_{n=1}^\infty \ind_{\left\{ \varphi(s) \in I_n \right\}} \alpha_n(s) \right) \sigma(s, X_s) \od B_s + \int_0^t b(s, X_s) \od s, \quad t \in \rplus, \\
\varphi(t) & = \P(X_t \leq r(t)),
\end{aligned}
\end{cases}
\end{equation}
where $(I_n)_{n=1}^\infty \subset \cB([0,1))$ forms a partition of $[0, 1)$ and $\alpha_n : \rplus \to \R$ is bounded and \newer{continuous} for all $n \in \N$.

\corr{
\begin{definition} \label{def_solution}
A strong solution to \eqref{eq_target_sde} is a pair $(X, T)$, where $T \in (0, \infty]$ is the lifetime of the solution and $X = (X)_{t \in [0, T)}$ is an adapted process with continuous sample paths such that 
\begin{enumerate}[label=\rm{(\arabic*)}]
    \item the equation \eqref{eq_target_sde} holds for all $t \in [0, T)$ $\P$-almost surely, and 
    \item for all $S < T$ one has 
    \begin{equation*}
    \int_0^S \left| b(s, X_s) \right| \od s + \int_0^S \left( \sum_{n=1}^\infty \ind_{\left\{ \varphi(s) \in I_n \right\}} \alpha_n(s)^2 \right) \left| \sigma(s, X_s) \right|^2 \od s < \infty
    \end{equation*}
    $\P$-almost surely, where $\varphi(t) = \P(X_t \leq r(t))$.
\end{enumerate}
A solution $(X, T)$ is unique (up to $T$) if for any other solution $(\widetilde{X}, S)$ one has $X_t = \widetilde{X}_t$ for all $t \in [0, T \wedge S)$ \newer{$\P$-almost surely}. Furthermore, a lifetime $\tmax \in (0, \infty]$ is maximal if there exists a unique solution $(X, \tmax)$ to \eqref{eq_target_sde} such that for any other solution $(\widetilde{X}, S)$ one has $S \leq \tmax$.
\end{definition}

\begin{remark}
If the lifetime of a solution is clear from the context, then we simply call the process $X$ itself a solution without explicitly specifying the lifetime.
\end{remark}
}

Using the notation
\begin{equation*}
U_t := \left\{ x \in \R \mid \sigma(t, x) > 0 \right\}, \quad t \in \rplus,
\end{equation*}
and 
\begin{equation} \label{eq_def_D}
D := \left\{ (t, x) \in \rplus \times \R \mid x \in U_t \right\} = \left\{ (t, x) \in \rplus \times \R \mid \sigma(t, x) > 0 \right\},
\end{equation}
our standing assumption for the functions $\sigma$ and $b$ states as follows:
\begin{assumption} \label{ass_initial}
There exist bounded functions $\sigma_1, \sigma_2 \in \cC^1(\rplus; \rplus)$ such that the following two conditions hold:
\begin{enumerate}[label=\bf{(As-$\sigma$)}]
    \item For all $(t, x) \in \rplus \times \R$ one has $\sigma(t, x) = \sigma_1(t)x + \sigma_2(t)$, and either 
    \begin{enumerate}[label=\rm{(\roman*)}]
        \item $\sigma_1 \equiv 0$ and $\sigma_2(t) > 0$ for all $t \in \rplus$, or
        \item $\sigma_1(t) > 0$ for all $t \in \rplus$.
    \end{enumerate} \label{standing_as_sigma}
\end{enumerate}
\begin{enumerate}[label=\bf{(As-$b$)}]
\setcounter{enumi}{1}
    \item \changedagain{The function $b : D \to \R$ is defined by 
    \begin{equation*}
    b(t, x) := \corr{\frac{\sigma_2'(t)}{\sigma_2(t)} }x + k(t),
    \end{equation*}
    if $\sigma_1 \equiv 0$, and by
    \begin{align*}
    b(t, x) := \, & \frac{\sigma_1'(t) \left[ \sigma_1(t) x + \sigma_2(t) \right] \log(\sigma_1(t)x + \sigma_2(t)) - \sigma_1(t)\sigma_2'(t) + \sigma_1'(t) \sigma_2(t)}{\sigma_1(t)^2} \\
    & + \ell(t) \left[ \sigma_1(t) x + \sigma_2(t) \right],
    \end{align*}
    if $\sigma_1(t) > 0$ for all $t \in \rplus$, where $k, \ell : \rplus \to \R$ are continuous functions. Accordingly, $b(t, \cdot)$ solves the ODE
    \begin{equation} \label{eq_ode_b}
    \sigma_1(t) b(t, x) = \partial_x b(t, x) \left[\sigma_1(t) x + \sigma_2(t) \right] - \sigma_1'(t) x - \sigma_2'(t), \quad x \in U_t,
    \end{equation}
    for all $t \in \rplus$.
    }
    \label{standing_as_b}
\end{enumerate}
\end{assumption}

One observes that under \ref{standing_as_sigma} one has 
\begin{equation*}
U_t = \left( -\frac{\sigma_2(t)}{\sigma_1(t)}, \infty \right),
\end{equation*}
where we use the convention \corr{$-\frac{x}{0} = -\infty$} for $x > 0$. \newer{In particular, if $\sigma_1 \equiv 0$, then $D = \rplus \times \R$.}

\corr{
\begin{definition}
\changedagain{We let} 
\begin{equation*}
D_+ := \left\{ (t, x) \in (0, \infty) \times \R \mid x \in U_t \right\} = \left\{ (t, x) \in (0, \infty) \times \R \mid \sigma(t, x) > 0 \right\},
\end{equation*}
which is an open set by \ref{standing_as_sigma}. We say that $F \in \cC^{1, 2}(D)$ if and only if the partial derivatives \changed{
\begin{equation*}
\frac{\partial F}{\partial x}, \,\, \frac{\partial^2 F}{\partial x^2}, \,\, \frac{\partial F}{\partial t}, \,\, \frac{\partial^2 F}{\partial x \partial t}, \,\, \frac{\partial^2 F}{\partial t \partial x}
\end{equation*}
\changedagain{exist and} are continuous on $D_+$, and they can be continuously extended to the set $\left\{ 0 \right\} \times U_0$.
}

\end{definition}
}

In \cref{sec_transformation} we show that under \ref{standing_as_sigma} and \ref{standing_as_b} we can transform the equation \eqref{eq_target_sde} into an SDE with deterministic coefficients, that is, there exists a transformation $F \in \cC^{1, 2}(D)$ invertible in the second component such that for $\xi_0 = F(0, x_0)$ and $\rho(t) = F(t, r(t))$ the process $Y := F(\cdot, X_{\cdot})$ solves 
\begin{equation} \label{eq_gaussian_sde}
\begin{cases}
\begin{aligned}
Y_t & = \xi_0 + \int_0^t \sum_{n=1}^\infty \ind_{\left\{ \varphi(s) \in I_n \right\}} \left[ \alpha_n(s) \od B_s + \beta_n(s) \od s \right], \quad t \in \rplus, \\
\varphi(t) & = \P(Y_t \leq \rho(t)),
\end{aligned}
\end{cases}
\end{equation}
where 
\begin{equation} \label{eq_beta}
\beta_n(t) := 
\begin{cases}
\begin{aligned}
& \frac{b(t, 1)}{\sigma_2(t)} - \frac{\sigma_2'(t)}{\newer{\sigma_2(t)^2}}, \quad && \sigma_1 \equiv 0, \\
& \frac{b(t, 1)}{\sigma_1(t) + \sigma_2(t)} - \frac{1}{2} \alpha_n(t)^2 \sigma_1(t) \\
& \hspace{2.0cm} + \frac{1}{\sigma_1(t)} \left[ \frac{\sigma_1'(t) + \sigma_2'(t)}{\sigma_1(t) + \sigma_2(t)} - \frac{\sigma_1'(t)}{\sigma_1(t)} \log( \sigma_1(t) + \sigma_2(t) ) \right], \quad && \sigma_1 > 0,
\end{aligned}
\end{cases}
\end{equation}
for $n \in \N$ and $t \in \rplus$. \corr{Here the notation $\sigma_1 > 0$ means that $\sigma_1(t) > 0$ for all $t \in \rplus$. \newer{Since the functions $(\alpha_n)_{n=1}^\infty$ and $b(\cdot, 1)$ are continuous (the continuity of $b(\cdot, 1)$ \changed{follows from \ref{standing_as_b} since $1 \in U_t$ for all $t \in \rplus$}), the function $\beta_n$ is also continuous for all $n \in \N$.} A solution to \eqref{eq_gaussian_sde} is defined similarly as for the original equation:

\begin{definition} \label{def_solution_Y}
A strong solution to \eqref{eq_gaussian_sde} is a pair $(Y, T)$, where $T \in (0, \infty]$ is the lifetime of the solution and $Y = (Y)_{t \in [0, T)}$ is an adapted process with continuous sample paths such that the equation \eqref{eq_gaussian_sde} holds for all $t \in [0, T)$ $\P$-almost surely, and for all $S < T$ one has
\begin{equation*}
\int_0^S \sum_{n=1}^\infty \ind_{\left\{ \varphi(s) \in I_n \right\}} \left( \alpha_n(s)^2 + \left| \beta_n \right| \right) \od s < \infty
\end{equation*}
$\P$-almost surely, where $\varphi(t) = \P(Y_t \leq \rho(t))$. The uniqueness of a solution and the maximality of a lifetime are defined in the same way as in \cref{def_solution}.
\end{definition}

The regularity assumptions on the transformation $F$ are the main reason why the coefficient functions in equation \eqref{eq_target_sde}  have a more specific form than in \eqref{eq_very_general_equation}: if each $\sigma_n$ in \eqref{eq_very_general_equation} had a different structure, then \newer{more conditions would be required to guarantee} the time regularity in $F$, hence it is convenient to assume that only the time-dependent factor in front of the diffusion coefficient changes. Furthermore, if $\sigma$ and $b$ in \eqref{eq_target_sde} depended on a measure component, then one would require a transformation that also depends on a measure component.
}


\section{Transformation into an SDE with deterministic coefficients} \label{sec_transformation}
In this section we show that equation \eqref{eq_target_sde} can be transformed into the SDE \eqref{eq_gaussian_sde} and back. We propose the transformation $F \in \cC^{1, 2}(D)$ defined by
\begin{equation} \label{eq_def_F_initial}
    F(t, x) = \begin{cases}
    \begin{aligned}
    & \frac{x}{\sigma_2(t)}, \quad && \sigma_1 \equiv 0, \\
    & \frac{\log( \sigma_1(t) x + \sigma_2(t) )}{\sigma_1(t)}, \quad && \sigma_1 > 0.
    \end{aligned}
    \end{cases}
\end{equation}
\newer{One observes that the function $F(t, \cdot)$ is onto $\R$, that is, $F(t, \newest{U_t}) = \R$ for all $t \in \rplus$.} Since $F(t, \cdot) : U_t \to \R$ is monotone, it has an inverse $G(t, \cdot) := F(t, \cdot)^{-1} : \R \to U_t$ given by
\begin{equation} \label{eq_def_G_initial}
    G(t, \newer{y}) = \begin{cases}
    \begin{aligned}
    & \sigma_2(t) \newer{y}, \quad && \sigma_1 \equiv 0, \\
    & \frac{\exp( \sigma_1(t) \newer{y} ) - \sigma_2(t)}{\sigma_1(t)}, \quad && \sigma_1 > 0.
    \end{aligned}
    \end{cases}
\end{equation}
One sees that $G \in \cC^{1, 2}(\rplus \times \R; D)$.

\begin{proposition} \label{prop_only_transform}
Suppose that \cref{ass_initial} holds. \corr{Then the following is true:}
\begin{enumerate}[label=\rm{(\roman*)}]
    \item \corr{Assume that $x_0$ is an $\cF_0$-measurable random variable such that \newer{$x_0(\omega) \in U_0$ for all $\omega \in \Omega$}, and the function $r : \rplus \to \R$ is measurable and satisfies $r(t) \in U_t$ for all $t \in \rplus$. Then} for any \corr{strong} solution \corr{$(X, T)$} to \eqref{eq_target_sde} such that \newer{$X_t(\omega) \in U_t$ for all $\omega \in \Omega$ and} $t \in [0, T)$, the process $Y_t := F(t, X_t)$, where $F$ is defined in \eqref{eq_def_F_initial}, solves \eqref{eq_gaussian_sde} on $[0, T)$ for $\xi_0 = F(0, x_0)$ and $\rho(t) = F(t, r(t))$. 
    \label{transf_eq_item_1}
    \item \corr{Assume that $\xi_0$ is an $\cF_0$-measurable random variable and the function $\rho : \rplus \to \R$ is measurable. Then} for any \corr{strong} solution \corr{$(Y, T)$} to \eqref{eq_gaussian_sde} the process $X_t := G(t, Y_t)$, where $G$ is defined in \eqref{eq_def_G_initial}, satisfies \newer{$X_t(\omega) \in U_t$ for all $(\omega, t) \in \Omega \times [0, T)$} and solves \eqref{eq_target_sde} on $[0, T)$ for $x_0 = G(0, \xi_0)$ and $r(t) = G(t, \rho(t))$. \label{transf_eq_item_2}
\end{enumerate}
\end{proposition}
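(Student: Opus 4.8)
The plan is to prove both directions by applying Itô's formula to the transformed process, handling the two cases of \ref{standing_as_sigma} in parallel. The conceptual starting point, which removes any apparent circularity, is the monotonicity observation recorded in the introduction. Since $F(t, \cdot)$ is strictly increasing on $U_t$ and $\rho(t) = F(t, r(t))$, one has the pathwise identity of events $\{X_s \le r(s)\} = \{Y_s \le \rho(s)\}$, and hence $\P(X_s \le r(s)) = \P(Y_s \le \rho(s))$ for every $s$. Consequently the two functions $\varphi$ appearing in \eqref{eq_target_sde} and \eqref{eq_gaussian_sde} coincide, so the regime indicators $\ind_{\left\{ \varphi(s) \in I_n \right\}}$, and therefore the whole switching process, are literally identical in both equations. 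I would establish this first, so that in the Itô computation the selection $\gamma(s) := \sum_n \ind_{\left\{ \varphi(s) \in I_n \right\}} \alpha_n(s)$ may be treated as a single prescribed, bounded, continuous-in-time process.

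For part \ref{transf_eq_item_1}, I would apply Itô's formula to $Y_t = F(t, X_t)$, which is licensed because $F \in \cC^{1,2}(D)$ and, by hypothesis, $(t, X_t) \in D$ for all $t \in [0, T)$; the continuous extension of the derivatives to $\left\{ 0 \right\} \times U_0$ handles the left endpoint. The diffusion term collapses via the Lamperti property $\partial_x F(t, x)\, \sigma(t, x) \equiv 1$, which is immediate from \eqref{eq_def_F_initial} and turns the stochastic integrand into exactly $\gamma(s)$, matching \eqref{eq_gaussian_sde}. For the drift one writes out $\partial_t F + (\partial_x F)\, b + \tfrac{1}{2} (\partial_{xx} F)\,\gamma^2 \sigma^2$; since at each time exactly one indicator is nonzero one has $\gamma(s)^2 = \sum_n \ind_{\left\{ \varphi(s) \in I_n \right\}} \alpha_n(s)^2$, and because $\tfrac{1}{2}(\partial_{xx}F)\sigma^2 = -\tfrac{1}{2}\sigma_1$ this produces precisely the $-\tfrac{1}{2}\alpha_n^2 \sigma_1$ term of \eqref{eq_beta}. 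The remaining part of the drift is a function $H(t, x) = \partial_t F + (\partial_x F) b$, and the crux of the argument is to show $\partial_x H \equiv 0$ using the ODE \eqref{eq_ode_b}: substituting $\sigma_1 b = (\partial_x b)(\sigma_1 x + \sigma_2) - \sigma_1' x - \sigma_2'$ into $\partial_x[b/\sigma]$ and combining the three resulting fractions over the common denominator $\sigma_1 (\sigma_1 x + \sigma_2)^2$, all terms cancel. Hence $H(t, x) = H(t, 1)$, and a direct evaluation at $x = 1$ (using $b(\cdot, 1)$, continuous since $1 \in U_t$) reproduces exactly the $\gamma$-independent part of $\beta_n$ in \eqref{eq_beta}. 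The case $\sigma_1 \equiv 0$ is the same computation with $F(t, x) = x/\sigma_2(t)$, where $\partial_{xx} F = 0$ and the drift reduces to $k(t)/\sigma_2(t)$, agreeing with \eqref{eq_beta}.

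Part \ref{transf_eq_item_2} is the mirror image: I would apply Itô's formula to $X_t = G(t, Y_t)$, first noting that $X_t \in U_t$ holds automatically because $G(t, \cdot)$ maps $\R$ onto $U_t$ by \eqref{eq_def_G_initial}; this both places $(t, X_t)$ in $D$ (so that $F$, $b$ and the ODE are available at $X_t$) and yields the claimed range. Strict monotonicity of $G(t, \cdot)$ again gives $\{Y_t \le \rho(t)\} = \{X_t \le r(t)\}$ with $r(t) = G(t, \rho(t))$, so the switching process is shared. The Itô computation for $G$ is structurally identical, using $\partial_y G(t, y) = \sigma(t, G(t, y))$ and $\partial_{yy} G = \sigma_1 \,\sigma(t, G)$ when $\sigma_1 > 0$, and it returns the coefficients of \eqref{eq_target_sde}; alternatively one may observe that $F(t, X_t) = Y_t$ and run the identities of part \ref{transf_eq_item_1} in reverse. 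In both parts the integrability requirements of \cref{def_solution} and \cref{def_solution_Y} are routine, since all coefficients are continuous in time and the sample paths are continuous, so every integrand is locally bounded on $[0, S]$ for each $S < T$.

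The main obstacle I anticipate is purely the drift algebra: verifying that the $x$-dependent Itô drift collapses to the $x$-independent expression $\beta_n$ is exactly where \ref{standing_as_b}, encoded in the ODE \eqref{eq_ode_b}, enters, and the cancellation must be carried out carefully over a common denominator in the $\sigma_1 > 0$ case. Everything else is bookkeeping once the $\varphi$-invariance from strict monotonicity is in place.
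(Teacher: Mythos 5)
Your proposal is correct and follows essentially the same route as the paper's proof: Itô's formula applied to $F(t,X_t)$ (resp.\ $G(t,Y_t)$), the Lamperti identity $\partial_x F\,\sigma \equiv 1$ for the diffusion term, the ODE \eqref{eq_ode_b} from \ref{standing_as_b} to show the Itô drift is independent of $x$ so that evaluation at $x=1$ yields \eqref{eq_beta}, and strict monotonicity of $F(t,\cdot)$, $G(t,\cdot)$ to identify $\varphi$ in the two equations. The only cosmetic difference is that you establish the $\varphi$-invariance at the outset while the paper records it at the end of each part; the mathematical content is identical.
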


\begin{proof}$ $\newline

{\noindent
\fbox{Proof of \ref{transf_eq_item_1}:}
Let $X = (X_t)_{t \in [0, T)}$ be a solution to \eqref{eq_target_sde} such that \newer{$X_t(\omega) \in U_t$ for all $(\omega, t) \in \Omega \times [0, T)$}. We apply Itô's formula to the function $F$ and the process $X$ to get 
\begin{align} \label{eq_ito_to_X}
\corr{Y_t} = F(t, X_t) = F(0, x_0) & + \int_0^t \left( \sum_{n=1}^\infty \ind_{\left\{ \varphi(s) \in I_n \right\}} \alpha_n(s) \right) \partial_x F(s, X_s) \sigma(s, X_s) \od B_s \nonumber \\
& + \frac{1}{2} \int_0^t \left( \sum_{n=1}^\infty \ind_{\left\{ \varphi(s) \in I_n \right\}} \alpha_n(s)^2 \right) \partial_{xx} F(s, X_s) \sigma(s, X_s)^2 \od s \nonumber \\
& + \int_0^t \left[ \partial_x F(s, X_s) b(s, X_s) + \partial_s F(s, X_s) \right] \od s 
\end{align}
for all $t \in [0, T)$. We want to show that
\begin{align}
1 & = \partial_x F(t, x) \sigma(t, x), \label{eq_alpha_1} \\ 
\beta_n(t) & = \frac{1}{2} \alpha_n(t)^2  \partial_{xx} F(t, x) \sigma(t, x)^2 + \partial_x F(t, x) b(t, x) + \partial_t F(t, x) \label{eq_beta_1}
\end{align}
for all $(t, x) \in \newer{D_+}$ and $n \in \N$. 
\corr{The property \eqref{eq_alpha_1} follows immediately by differentiating $F$ with respect to $x$. To prove \eqref{eq_beta_1} we use \eqref{eq_alpha_1} and the definition of $F$ in \eqref{eq_def_F_initial} to compute the partial derivatives
\begin{equation} \label{eq_partial_F}
\partial_x F(t, x) = \frac{1}{\sigma(t, x)}, \quad \partial_{xx} F(t, x) = -\frac{\partial_{x} \sigma(t, x)}{\sigma(t, x)^2}, \quad \partial_x ( \partial_t F(t, x) ) = -\frac{\partial_t \sigma(t, x)}{\sigma(t, x)^2}
\end{equation}
for $(t, x) \in \newer{D_+}$. Using \eqref{eq_partial_F} we see that 
\begin{equation*}
\partial_{xx} F(t, x) \sigma(t, x)^2 = -\partial_x \sigma(t, x),
\end{equation*}
hence the right-hand side of \eqref{eq_beta_1} is differentiable with respect to $x$: Recalling that $\sigma(t, x) = \sigma_1(t) x + \sigma_2(t)$ by \ref{standing_as_sigma} we use \eqref{eq_partial_F} to compute} 
\begin{align*}
& \frac{\partial}{\partial x} \left[ \frac{1}{2} \alpha_n(t)^2 \partial_{xx} F(t, x) \sigma(t, x)^2 + \partial_x F(t, x) b(t, x) + \partial_t F(t, x) \right] \\
&\corr{= -\frac{1}{2} \alpha_n(t)^2 \partial_{xx} \sigma(t, x) + \frac{\partial}{\partial x} \left[ \frac{b(t, x)}{\sigma(t, x)}\right] -\frac{\partial_t \sigma(t, x)}{\sigma(t, x)^2} } \\
& = \frac{\left[\sigma_1(t) x + \sigma_2(t) \right] \partial_x b(t, x) - \sigma_1(t) b(t, x) - \sigma_1'(t) x - \sigma_2'(t) }{\sigma(t, x)^2} \\
& = 0
\end{align*}
for all $(t, x) \in \newer{D_+}$, \corr{where the last equality follows from} \ref{standing_as_b}. Since $1 \in U_t$ for all $t \in \rplus$ under the assumption \ref{standing_as_sigma}, we can let $x = 1$ in \eqref{eq_beta_1} to get \eqref{eq_beta}.}

We conclude the proof of the first part by observing that since $F(t, \cdot)$ is invertible and $r(t) \in U_t$ for all $t \in [0, T)$, one has 
\begin{equation*}
\P(X_t \leq r(t)) = \P(F(t, X_t) \leq F(t, r(t))) = \P(Y \leq \rho(t))
\end{equation*}
for all $t \in [0, T)$.

\bigskip

{\noindent
\fbox{Proof of \ref{transf_eq_item_2}:} 
Let $Y = (Y_t)_{t \in [0, T)}$ be a solution to \eqref{eq_gaussian_sde} on some interval $[0, T) \subseteq \rplus$. \newest{Now one has} $X_t = G(t, Y_t) \in U_t$ for all $t \in [0, T)$.}

We proceed in the same way as in the first case: by recalling that the inverse of $G(t, \cdot)$ is $F(t, \cdot)$ we apply Itô's formula to get 
\begin{align*}
X_t = G(0, \xi_0) & + \int_0^t \left( \sum_{n=1}^\infty \ind_{\left\{ \varphi(s) \in I_n \right\}} \alpha_n(s) \right) \new{\newer{\partial_y G}}(s, F(s, X_s)) \od B_s \\
& + \int_0^t \sum_{n=1}^\infty \ind_{\left\{ \varphi(s) \in I_n \right\}} \left[ \frac{1}{2} \alpha_n(s)^2 \new{\newer{\partial_{yy} G}}(s, F(s, X_s)) + \beta_n(s) \new{\newer{\partial_y G}}(s, F(s, X_s)) \right] \od s \\
& + \int_0^t \new{[\partial_s G]}(s, F(s, X_s)) \od s
\end{align*}
for $t \in [0, T)$.
We want to show that 
\begin{align}
\sigma(t, x) & = \new{\newer{\partial_y G}}(t, F(t, x)) , \label{eq_alpha_2} \\
b(t, x) & = \frac{1}{2} \alpha_n(t)^2 \new{\newer{\partial_{yy} G}}(t, F(t, x)) + \beta_n(t) \new{\newer{\partial_y G}}(t, F(t, x)) + \new{[\partial_t G]}(t, F(t, x)) \label{eq_beta_2}
\end{align} 
for $(t, x) \in \newer{D_+}$ and $n \in \N$ when using $\beta_n$ defined in \eqref{eq_beta}. 
The property \eqref{eq_alpha_2} is straightforward to show by computing the partial derivative $\newer{\partial_{y} G}$ \new{using \eqref{eq_def_G_initial}}. To prove \eqref{eq_beta_2}, we \new{first observe that by \eqref{eq_alpha_1} and \eqref{eq_alpha_2} it holds
\begin{align*}
\newer{\partial_{y} G} (t, y) = \frac{1}{\partial_x F (t, G(t, y))}
\end{align*}
for $(t, y) \in \rplus \times \R$, where we used that $F(t, \cdot)^{-1} = G(t, \cdot)$. This in turn implies 
\begin{align} \label{eq_partial_G1}
\newer{\partial_{yy} G}(t, F(t, x)) = - \left. \frac{\newer{\partial_{y} G}(t, y) \partial_{xx} F (t, G(t, y)) }{\partial_x F(t, G(t, y))^2}  \right|_{y = F(t, x)} = \left[ -\left( \partial_{xx} F(t, x) \right) \sigma(t, x)^2 \right] \sigma(t, x)
\end{align}
for $(t, x) \in \newer{D_+}$ by \eqref{eq_partial_F}. Second, we use \eqref{eq_def_F_initial} and \eqref{eq_def_G_initial} to verify that 
\begin{equation}  \label{eq_partial_G2}
\left[ \partial_t G \right] (t, F(t, x)) = -\left[ \partial_t F(t, x) \right] \sigma(t, x)
\end{equation}
for all $(t, x) \in \newer{D_+}$.} \corr{We exploit \eqref{eq_alpha_2}, \eqref{eq_partial_G1} \new{and \eqref{eq_partial_G2}} to get 
\begin{align*}
& \frac{1}{2} \alpha_n(t)^2 \newer{\partial_{yy} G}(t, F(t, x)) + \beta_n(t)\newer{\partial_{y} G}(t, F(t, x)) + \newer{[\partial_t G]}(t, F(t, x)) \\
& = \sigma(t, x) \left[ -\frac{1}{2} \alpha_n(t)^2 \partial_{xx} F(t, x) \sigma(t, x)^2 - \partial_t F(t, x) + \beta_n(t) \right] \\
& = \sigma(t, x) \left[ \partial_x F(t, x) b(t, x) \right] \\
& = \sigma(t, x) \frac{b(t, x)}{\sigma(t, x)} \\
& = b(t, x)
\end{align*}
for all $(t, x) \in \newer{D_+}$, where we replaced $\beta_n$ \newest{by} the right-hand side of \eqref{eq_beta_1} to obtain the second equality.
}

Moreover, since $G(t, \cdot)$ is invertible, we complete the proof by observing that 
\begin{equation*}
\P(Y_t \leq \rho(t)) = \P(G(t, Y_t) \leq G(t, \rho(t))) = \P(X \leq r(t))
\end{equation*}
for $t \in [0, T)$.
\end{proof}

\begin{remark}
If we have $\alpha_n \equiv 1$ for all $n \in \N$, then one sees that the transformation $F$ turns the diffusion \corr{coefficient} into a \corr{constant function}. This is known as the Lamperti transformation \cite[Chapter 7.1]{sarkka}.
\end{remark}

Next we provide three simple examples of equations that can be transformed into an SDE with deterministic coefficients. \newer{In \cref{section_existence} we require that the transformed initial condition is normally distributed, that is, $\xi_0 = F(0, x_0) \sim \cN(\imean, \ivar^2)$ for some parameters $\imean \in \R$ and $\ivar^2 > 0$. This is also taken into account in the examples below.}

\begin{example}[Gaussian SDE]
Given a positive function $c \in \cC^1(\rplus)$ \newer{such that $c(0) = 1$}, and any \newer{continuously differentiable} function $\newest{a} : \rplus \to \R$ consider
\begin{equation*}
X_t = x_0 + \int_0^t \left( \sum_{n=1}^\infty \ind_{\left\{ \P(X_s \leq r(s)) \in I_n \right\}} \alpha_n(s) \right) c(s) \od B_s + \int_0^t \left[ \frac{c'(s)}{\newer{c(s)}} X_s + \frac{a(s)}{\newer{c(s)}} \right] \od s,
\end{equation*} 
where $x_0 \sim \cN(\imean, \ivar^2)$. \newest{Here $\sigma_1 \equiv 0$, $\sigma_2(t) = c(t)$ and $b(t, x) = \frac{c'(t)}{c(t)}x + \frac{a(t)}{c(t)}$.} The transformation is $F(t, x) = \frac{x}{c(t)}$, and the transformed drift coefficient functions are
\begin{align*}
\beta_n(t) = \frac{a(t)}{\newer{c(t)^2}}.
\end{align*}
In particular the functions $\beta_n$ are independent of $n$. One sees that if $c \equiv 1$, then $F$ is \newest{the} identity map since the equation is already in the desired form, and if $a \equiv 0$, then $\beta_n \equiv 0$.
\end{example}

\begin{example}[Linear SDE] \label{ex_linear_SDE}
Given a \newer{continuous and bounded} function $\changed{b} : \rplus \to \R$ consider
\begin{equation*}
X_t = x_0 + \int_0^t \left( \sum_{n=1}^\infty \ind_{\left\{ \P(X_s \leq r(s)) \in I_n \right\}} \alpha_n(s) \right)  X_s \od B_s + \int_0^t b(s) X_s \od s, \quad t \in \rplus,
\end{equation*}
where $x_0 \sim \lognormal(\imean, \ivar^2)$. \newest{Here $\sigma_1 \equiv 1$, $\sigma_2 \equiv 0$ and $b(t, x) = b(t)x$.} The transformation is $F(t, x) = \log(x)$ and the transformed drift coefficient functions are 
\begin{align*}
& \beta_n(t) = b(t) - \frac{1}{2} \alpha_n(t)^2.
\end{align*}
\newer{By the definition of the log-normal distribution one has $F(0, x_0) = \log(x_0) \sim \cN(\imean, \ivar^2)$}
\end{example}

\begin{example}[SDE with logarithm in the drift] \label{ex_loglinear_SDE}
Given a positive function $c \in \cC^1(\rplus)$ \newer{such that $c(0) = 1$} consider
\begin{equation} \label{eq_sde_with_log}
X_t = x_0 + \int_0^t \left( \sum_{n=1}^\infty \ind_{\left\{ \P(X_s \leq r(s)) \in I_n \right\}} \alpha_n(s) \right)  c(s) X_s \od B_s + \int_0^t \frac{c'(s)}{c(s)} X_s \log(c(s) X_s) \od s, \quad t \in \rplus,
\end{equation}
where $x_0 \sim \lognormal(\imean, \ivar^2)$. \newest{Here $\sigma_1(t) = c(t)$, $\sigma_2 \equiv 0$ and $b(t, x) = \frac{c'(t)}{c(t)} x \log(c(t) x)$.} The transformation is $F(t, x) = \frac{\log(c(t) x)}{c(t)} $ and the transformed drift coefficient functions are 
\begin{align*}
& \beta_n(t) = \frac{c'(t)}{c(t)^2} - \frac{1}{2} \alpha_n(t)^2 c(t).
\end{align*}
\newer{Since $c(0) = 1$ one has $F(0, x_0) = \log(x_0) \sim \cN(\imean, \ivar^2$).}

As a special case of \eqref{eq_sde_with_log} one obtains, by choosing $c(t) = \exp(-f(t))$ for any \newer{non-negative} function $f \in \cC^1(\rplus)$ \newer{such that $f(0) = 0$}, the SDE 
\begin{align*}
X_t = x_0 & + \int_0^t \left( \sum_{n=1}^\infty \ind_{\left\{ \P(X_s \leq r(s)) \in I_n \right\}} \alpha_n(s) \right) \frac{X_s}{\exp(f(s))} \od B_s \\
& + \int_0^t f'(s) \left[f(s)X_s - X_s \log(X_s)  \right] \od s, \quad t \in \rplus.
\end{align*}

\end{example}

\changed{
\begin{remark}
\changedagain{
The examples above cover basic types of SDEs for which \cref{prop_only_transform} can be applied. At the same time, they are important for the following reasons:
}
\begin{enumerate}[label=\rm{(\arabic*)}]
    \item They give concrete examples of coefficient functions that satisfy \cref{ass_initial}.
    \item Using these examples one can find $\alpha_n$ and $\beta_n$ that satisfy \ref{hypo_increasing} below. This is an essential assumption in the existence and uniqueness theorem, \cref{theo_existence}, which is proven in \cref{section_existence}.
    \item In \cref{sec_counter_examples}, where we study non-uniqueness and non-existence of a solution, the results are formulated only for the transformed equation \cref{eq_gaussian_sde}. However, with the help of \cref{ex_linear_SDE} and \cref{ex_loglinear_SDE} one can find examples of functions $\alpha_1, \alpha_2, \beta_1$ and $\beta_2$ that satisfy the assumptions of \cref{prop_many_solutions}, \cref{theo_no_solution} and \cref{rem_inf_solutions}.
\end{enumerate}
\end{remark}
}


\section{Existence and uniqueness of a strong solution} \label{section_existence}

In this section we show that under additional assumptions the equation \eqref{eq_target_sde} has a unique strong solution defined on some maximal interval, which means that the solution cannot be continuously extended beyond this interval. 

Our standing assumption for this section is the following:
\begin{assumption} \label{ass_existence}
Suppose that in addition to \cref{ass_initial} the following also holds:
\begin{enumerate}[label=\bf{(As-$x_0$)}]
    \item The initial condition $x_0$ is $\cF_0$-measurable, satisfies $\P(x_0 \in U_0) = 1$ and $F(0, x_0) \sim \cN(\imean, \ivar^2)$, where $F$ is the transformation defined in \eqref{eq_def_F_initial}, \newer{and $\imean \in \R$ and $\ivar^2 > 0$ satisfy
    \begin{equation} \label{eq_cond_mu_initial}
    \left| \imean \right| \leq \frac{1}{2} \ivar^2.
    \end{equation}}
    \label{hypo_x0}
\end{enumerate}
\begin{enumerate}[label=\bf{(As-$I$)}]
    \item There exists $0 =: y_0 < y_1 < y_2 < ... \uparrow 1$ such that $I_n = [y_{n-1}, y_n)$. \label{hypo_almost_intervals} 
\end{enumerate}
\begin{enumerate}[label=\bf{(As-$r$)}]
    \item $r(t) \in U_t$ for all $t \in \rplus$, the function \newest{$r$} is continuous\newer{ly differentiable}, and \newest{the function $\rho(t) := F(t, r(t))$ is} non-decreasing and \newest{satisfies}  $\sup_{t \in \rplus} \rho(t) \leq \imean$. \label{hypo_r}
\end{enumerate}
\begin{enumerate}[label=\bf{(As-$\beta_n$)}]
    \item For all $n \in \N$ and $t \in \rplus$ it holds
    \begin{equation} \label{eq_cond_strong_diffusion}
    -\frac{1}{2} \alpha_n(t)^2 \leq \beta_n(t) \leq -\frac{1}{4} \alpha_n(t)^2,
    \end{equation} 
    where $\beta_n$ is defined in \eqref{eq_beta}, and \newer{for all $n \in \N$} at least one of the inequalities \newer{in \eqref{eq_cond_strong_diffusion}} is strict \newer{uniformly for all $t \in \rplus$}. \label{hypo_increasing}
\end{enumerate}
\end{assumption}

The main result of this section states as follows:
\begin{theorem} \label{theo_existence}
Supposing that \cref{ass_existence} holds the equation \eqref{eq_target_sde} has a strong solution \corr{$(X, \tmax)$ for some $\tmax \in (0, \infty]$} such that $X_t \in U_t$ for all $t \in [0, \tmax)$. Moreover:
\begin{enumerate} [label=\rm{(\alph*)}]
    \item \textbf{Uniqueness}: The solution is unique in the sense that for any other solution $\widetilde{X} = (\widetilde{X}_t)_{t \in [0, S)}$ to \eqref{eq_target_sde} defined on some interval $[0, S) \subseteq \rplus$ such that $\widetilde{X}_t \in U_t$ for all $t \in [0, S)$, one has $S \leq \tmax$ and $X_t = \widetilde{X}_t$ $\P$-almost surely for all $t \in [0, \tmax \wedge S) = [0, S)$.
    \item \textbf{Maximality}: The \corr{lifetime} $\tmax \in (0, \infty]$ is maximal: \newest{there is no random variable $Y_{\tmax}$ such that $F(t, X_t) \to Y_{\tmax}$ in distribution as $t \to \tmax^-$.} Consequently, the solution \newest{$X$} cannot be extended beyond $\tmax$.
    \item \textbf{Globality}: If there is a constant $C > 0$ such that 
    \begin{equation} \label{eq_as_bounded_beta}
    \sup_{n \in \N} \left( \sup_{t \geq 0} \alpha_{n}(t)^2 \right) \leq C,
    \end{equation}
    then $\tmax = \infty$.
    \item \textbf{Bijectivity} (of $\varphi$): If also 
    \begin{equation} \label{eq_positive_alphas}
    \inf_{n \in \N} \left( \inf_{t \in \rplus} \alpha_n(t)^2 \right) > 0,
    \end{equation}
    then the map $\varphi(\cdot) - \varphi(0) : [0, \tmax) \to [0, 1)$ is \newer{continuous,} strictly increasing and for all $n \in \N$ it holds
    \begin{equation} \label{eq_finite_levels}
    \inf \left\{ t \in \rplus \mid \varphi(t) \geq y_n \right\} < \infty,
    \end{equation}
    that is, every level $y_n$ above $\varphi(0)$ is reached in a finite time.
\end{enumerate}
\end{theorem}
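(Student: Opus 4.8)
The plan is to push everything through the transformed equation \eqref{eq_gaussian_sde}, which by \cref{prop_only_transform} is equivalent to \eqref{eq_target_sde}: a strong solution $(X,T)$ of the latter with $X_t\in U_t$ corresponds via $Y=F(\cdot,X_\cdot)$ to a strong solution $(Y,T)$ of the former, and \ref{hypo_x0} gives $\xi_0=F(0,x_0)\sim\cN(\imean,\ivar^2)$. The decisive feature of \eqref{eq_gaussian_sde} is that its coefficients are deterministic: since $(I_n)$ partitions $[0,1)$, at each time $s$ a single index $n(s)$ is selected by $\varphi(s)\in I_{n(s)}$, and on that regime $\od Y_s=\alpha_{n(s)}(s)\,\od B_s+\beta_{n(s)}(s)\,\od s$. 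Hence, once the deterministic function $\varphi$ is known, $Y_t\sim\cN(m(t),v(t))$ with
\begin{equation*}
m(t)=\imean+\int_0^t\beta_{n(s)}(s)\,\od s,\qquad v(t)=\ivar^2+\int_0^t\alpha_{n(s)}(s)^2\,\od s,
\end{equation*}
and the self-consistency relation reads $\varphi(t)=\Phi(z(t))$ with $z(t):=(\rho(t)-m(t))/\sqrt{v(t)}$. The problem thus reduces to a scalar fixed-point problem for $\varphi$, equivalently for the piecewise-constant regime function $n(\cdot)$.

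The heart of the argument, and the step I expect to be the main obstacle, is to prove that every such $\varphi$ is monotone; this is where all hypotheses of \cref{ass_existence} enter. On a regime with fixed index $n$ one computes
\begin{equation*}
z'(t)=\frac{\rho'(t)-\beta_n(t)}{\sqrt{v(t)}}-\frac{z(t)\,\alpha_n(t)^2}{2v(t)}.
\end{equation*}
Using the lower bound $\beta_n\geq-\tfrac12\alpha_n^2$ of \ref{hypo_increasing} together with $\sup_t\rho(t)\leq\imean$ from \ref{hypo_r}, I would first establish the a priori estimate $z(t)\leq\tfrac12\sqrt{v(t)}$, since $\rho(t)-m(t)=(\rho(t)-\imean)-\int_0^t\beta_{n(s)}\,\od s\leq(\rho(t)-\imean)+\tfrac12(v(t)-\ivar^2)\leq\tfrac12 v(t)$. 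Inserting this and the upper bound $\beta_n\leq-\tfrac14\alpha_n^2$ (so $\rho'-\beta_n\geq\tfrac14\alpha_n^2$) into the formula for $z'$ gives $z'\geq\tfrac{\alpha_n^2}{4\sqrt{v}}\bigl(1-2z/\sqrt{v}\bigr)\geq0$, so $\varphi=\Phi(z)$ is non-decreasing; the uniform strictness in \ref{hypo_increasing}, combined with $\inf_{n,t}\alpha_n^2>0$ in the setting of (d), upgrades this to $z'>0$.

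With monotonicity available I would construct the solution explicitly. Starting from $\varphi(0)=\Phi((\rho(0)-\imean)/\ivar)\in I_N$, I solve the deterministic ODEs for $(m,v)$ on the active regime until $\varphi$ first reaches the right endpoint of the current interval, at which instant, by the ordering of $I_n=[y_{n-1},y_n)$ in \ref{hypo_almost_intervals}, the regime advances by one. This yields an increasing sequence of crossing times $\theta_1<\theta_2<\cdots$ and defines $\varphi,m,v$ on $[0,\tmax)$ with $\tmax:=\sup_k\theta_k$; setting $Y$ to be the associated linear SDE and $X=G(\cdot,Y_\cdot)$ produces a strong solution by \cref{prop_only_transform}. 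Uniqueness (a) follows because any competing solution transforms to a non-decreasing $\widetilde\varphi$ with the same initial value and the same regime recursion, forcing $\widetilde\varphi\equiv\varphi$, hence $\widetilde m\equiv m$, $\widetilde v\equiv v$, and by strong uniqueness for a linear SDE with deterministic coefficients $\widetilde Y\equiv Y$ on the common interval; the same reasoning shows no solution survives past $\tmax$.

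For maximality (b) I would argue that $\tmax<\infty$ forces $v(t)\to\infty$ as $t\to\tmax^-$: finitely many switches would leave a last regime $n$ active on a bounded interval, where $\alpha_n$ is bounded, so $v$ would stay finite and the solution would extend, a contradiction; thus infinitely many switches occur, $\varphi(t)\to1$, and since $|\beta_n|\leq\tfrac12\alpha_n^2$ a finite limit of $v$ would keep $m$, hence $z$, bounded, contradicting $z\to\infty$. An exploding variance prevents $\cN(m(t),v(t))$ from converging in distribution, so no limiting $Y_{\tmax}$ exists and the process cannot be continued. Globality (c) is then immediate: under $\sup_{n,t}\alpha_n^2\leq C$ one has $v(t)\leq\ivar^2+Ct<\infty$ for every finite $t$, so the variance never explodes in finite time and $\tmax=\infty$. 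Finally, for (d) the additional bound $\inf_{n,t}\alpha_n^2\geq\delta>0$ gives $v(t)\geq\ivar^2+\delta t$, whence the lower estimate $z(t)\geq(\rho(t)-\imean+\tfrac14(v(t)-\ivar^2))/\sqrt{v(t)}\to\infty$; together with the strict monotonicity from the second paragraph this shows $\varphi$ is continuous and strictly increasing and reaches every level $y_n>\varphi(0)$ in finite time, which is \eqref{eq_finite_levels}.
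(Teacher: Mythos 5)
Your proof is correct, and its skeleton coincides with the paper's: transform to \eqref{eq_gaussian_sde} via \cref{prop_only_transform}, use that $Y_t\sim\cN(m(t),v(t))$ with deterministic $m,v$, show that $\varphi=\Phi(z)$ with $z=(\rho-m)/\sqrt{v}$ is monotone, and assemble the solution regime by regime through the crossing times; your formula for $z'$ is exactly the derivative computed inside the paper's \cref{lemma_monotonicity}. The differences are in execution and are worth recording. First, you run the monotonicity estimate as an a.e.\ differential inequality valid for an \emph{arbitrary} solution, exploiting that the bounds in \ref{hypo_increasing} hold for every $n$, so the inequality is insensitive to which regime is active; the paper instead proves a per-regime lemma restarted at each $T_{n-1}$ (with \cref{rem_when_other_lemma_can_be_used} propagating the condition \eqref{eq_cond_mu}) and then handles uniqueness by a case analysis. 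Your uniform version is cleaner and your a priori bound $z\le\tfrac12\sqrt{v}$ replaces the paper's mean--variance bookkeeping; just note that the clause ``the same regime recursion, forcing $\widetilde\varphi\equiv\varphi$'' still hides the short induction over crossing times that the paper writes out with $S_1,S_2,\dots$ (continuity plus monotonicity rule out sitting at or dipping below a level, and then successive crossing times must coincide). Second, for maximality you show $v(t)\to\infty$ and conclude by loss of tightness, whereas the paper concludes by the characteristic functions converging to $\ind_{\left\{u=0\right\}}$; this is the same substance. Third, your globality is a corollary of the explosion dichotomy (under \eqref{eq_as_bounded_beta} one has $v(t)\le\ivar^2+Ct$, so the variance cannot blow up in finite time), while the paper bounds the crossing times directly by $T_n\ge\tfrac{2\ivar}{C}\Phi^{-1}(y_n)\to\infty$; both work. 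Finally, a minor divergence rather than an error: you defer strict monotonicity to part (d), invoking \eqref{eq_positive_alphas}, which matches the statement; but it actually follows from \ref{hypo_increasing} alone, since if the left inequality is the uniformly strict one, $\beta_n\le-\tfrac14\alpha_n^2$ forces $\alpha_n(t)^2\ge4\ep$ and strictly improves your bound on $z$, so $z'>0$ in all cases --- this is how the paper gets strictness already in the existence step, reserving \eqref{eq_positive_alphas} solely for \eqref{eq_finite_levels}.
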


To prove \cref{theo_existence} we require an auxiliary lemma \corr{that enables us to} determine whether the function $\varphi$ is strictly increasing or decreasing on a given interval.

\begin{lemma} \label{lemma_monotonicity} 
Suppose that $0 \leq t_0 < t_1 \leq \infty$, $\rho : [t_0, t_1) \to \R$ is a \newer{continuously differentiable} function and $\xi_{t_0} \sim \cN(\imean + \bar \mu_{t_0}, \ivar^2 + \bar \sigma_{t_0}^2)$ is an $\cF_{t_0}$-measurable random variable, \corr{where $\imean, \bar \mu_{t_0} \in \R$, $\ivar^2 > 0$ and $\bar \sigma_{t_0}^2 \geq 0$} such that 
\begin{equation} \label{eq_cond_mu}
\left| \bar \mu_{t_0} \right| \leq \frac{1}{2} \bar \sigma_{t_0}^2.
\end{equation}
For a fixed $n_0 \in \N$ let
\begin{equation} \label{eq_Y_psi}
Y^{(n_0, t_0, \xi_{t_0})}_t := \xi_{t_0} + \int_{t_0}^t \left[\alpha_{n_0}(s) \od B_s + \beta_{n_0}(s)  \od s \right], \quad t \in [t_0, t_1),
\end{equation}
for some bounded and \newer{continuous} functions $\alpha_{n_0}, \beta_{n_0} : \rplus \to \R$. Then \corr{the function
\begin{equation*}
\psi(t) := \P(Y_t^{(n_0, t_0, \xi_{t_0})} \leq \rho(t)), \quad t \in [t_0, t_1),
\end{equation*}
}
\newer{is continuous} and the following holds:
\begin{enumerate} [label=\rm{(\roman*)}]
\item If $\rho$ is non-decreasing, $\sup_{t \in [t_0, t_1)} \rho(t) \leq \imean$ and
\begin{equation} \label{eq_beta_neg} 
-\frac{1}{2} \alpha_{n_0}(t)^2 \leq \beta_{n_0}(t) \leq -\frac{1}{4} \alpha_{n_0}(t)^2
\end{equation}
for all $t \in [t_0, t_1)$, then $\psi$ is non-decreasing on $[t_0, t_1)$. Furthermore, if either of the inequalities in \eqref{eq_beta_neg} is strict \newer{uniformly for all $t \in [t_0, t_1)$}, then $\psi$ is strictly increasing, and if \corr{additionally} $t_1 = \infty$ and
\begin{equation} \label{eq_alpha_goes_infinity}
\inf_{t \in [t_0, \infty)} \alpha_{n_0}(t)^2 > 0,
\end{equation}
then $\psi(t) \uparrow 1$ as $t \to \infty$.

\label{item_increasing}
\item If $\rho$ is non-increasing, $\inf_{t \in [t_0, t_1)} \rho(t) \geq \imean$ and
\begin{equation} \label{eq_beta_pos} 
 \frac{1}{4} \alpha_{n_0}(t)^2 \leq \beta_{n_0}(t) \leq \frac{1}{2} \alpha_{n_0}(t)^2
\end{equation}
for $t \in [t_0, t_1)$, then $\psi$ is non-increasing on $[t_0, t_1)$. Furthermore, if either of the inequalities in \eqref{eq_beta_pos} is strict \newer{uniformly for all $t \in [t_0, t_1)$}, then $\psi$ is strictly decreasing, and if \corr{additionally} $t_1 = \infty$ and \eqref{eq_alpha_goes_infinity} holds, then $\psi(t) \downarrow 0$ as $t \to \infty$.
\label{item_decreasing}
\end{enumerate}
\end{lemma}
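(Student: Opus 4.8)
The plan is to exploit that $Y:=Y^{(n_0,t_0,\xi_{t_0})}$ is a Gaussian process, which collapses $\psi$ to $\Phi$ composed with an explicit scalar function whose monotonicity I can read off by differentiation. Since $\xi_{t_0}$ is $\cF_{t_0}$-measurable and the increments of $B$ on $[t_0,t]$ are independent of $\cF_{t_0}$, the variable $Y_t$ is Gaussian for each $t$ with mean and variance
\begin{equation*}
m(t) = \imean + \bar\mu_{t_0} + \int_{t_0}^t \beta_{n_0}(s)\,\od s, \qquad v(t) = \ivar^2 + \bar\sigma_{t_0}^2 + \int_{t_0}^t \alpha_{n_0}(s)^2 \,\od s .
\end{equation*}
Because $v(t)\geq\ivar^2>0$ and $m,v$ are $\cC^1$ (the integrands are continuous), the function
\begin{equation*}
g(t) := \frac{\rho(t) - m(t)}{\sqrt{v(t)}}, \qquad \psi(t) = \P(Y_t \leq \rho(t)) = \Phi\big(g(t)\big),
\end{equation*}
is well defined and $\cC^1$; continuity of $\psi$ is then immediate from continuity of $g$ and $\Phi$. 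As $\Phi$ is strictly increasing, $\psi$ inherits the (strict) monotonicity of $g$, so it suffices to control the sign of $g'$.

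For case \ref{item_increasing} I would differentiate, using $m'=\beta_{n_0}$ and $v'=\alpha_{n_0}^2$, to obtain $g'(t)=N(t)/v(t)^{3/2}$ with $N(t) = (\rho'(t)-\beta_{n_0}(t))v(t) - \tfrac12(\rho(t)-m(t))\alpha_{n_0}(t)^2$. The heart of the argument is the algebraic identity
\begin{equation*}
N(t) = \rho'(t)v(t) + \Big(-\beta_{n_0}(t)-\tfrac14\alpha_{n_0}(t)^2\Big)v(t) + \tfrac12\alpha_{n_0}(t)^2\Big(\tfrac12 v(t) - \big(\rho(t)-m(t)\big)\Big),
\end{equation*}
which I would combine with the key lower bound $\tfrac12 v(t) - (\rho(t)-m(t)) \geq \tfrac12\ivar^2 > 0$. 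This bound is exactly where all three hypotheses enter: from $\rho(t)\leq\imean$ one has $\rho(t)-m(t)\leq -\bar\mu_{t_0}-\int_{t_0}^t\beta_{n_0}$, so the leftover splits as $\big(\tfrac12\bar\sigma_{t_0}^2+\bar\mu_{t_0}\big)+\int_{t_0}^t\big(\tfrac12\alpha_{n_0}^2+\beta_{n_0}\big)$, whose first bracket is nonnegative by $|\bar\mu_{t_0}|\leq\tfrac12\bar\sigma_{t_0}^2$ and whose integrand is nonnegative by $\beta_{n_0}\geq-\tfrac12\alpha_{n_0}^2$. Together with $\rho'\geq0$ and $\beta_{n_0}\leq-\tfrac14\alpha_{n_0}^2$, all three summands of $N$ are nonnegative, hence $g'\geq0$ and $\psi$ is non-decreasing.

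For the strict statement I would note that each summand of $N$ can vanish only degenerately: the strictness of one of the inequalities in \eqref{eq_beta_neg} forces $\alpha_{n_0}(t)\neq0$ for every $t$ (at a zero of $\alpha_{n_0}$ both $\beta$-bounds collapse to $\beta_{n_0}(t)=0$, so no inequality can be strict there), and then the third summand is bounded below by $\tfrac14\alpha_{n_0}(t)^2\ivar^2>0$, giving $N>0$ and hence $g$, and therefore $\psi$, strictly increasing. For the limit, under $t_1=\infty$ together with \eqref{eq_alpha_goes_infinity} the bounded non-decreasing $\rho$ converges while $m(t)\to-\infty$ at least linearly (since $\beta_{n_0}\leq-\tfrac14\inf_t\alpha_{n_0}(t)^2<0$) and $v(t)=O(t)$ (as $\alpha_{n_0}$ is bounded); thus $g(t)\to+\infty$ and $\psi(t)=\Phi(g(t))\uparrow1$.

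Finally, I would deduce case \ref{item_decreasing} from \ref{item_increasing} by applying the latter to the reflected data $\widetilde Y_t=-Y_t$, $\widetilde\rho=-\rho$, with parameters $\widetilde{\imean}=-\imean$ and $\widetilde{\bar\mu}_{t_0}=-\bar\mu_{t_0}$: the initial variance and $\alpha_{n_0}^2$ are unchanged, the drift becomes $-\beta_{n_0}\in[-\tfrac12\alpha_{n_0}^2,-\tfrac14\alpha_{n_0}^2]$, $\widetilde\rho$ is non-decreasing with $\sup\widetilde\rho=-\inf\rho\leq-\imean=\widetilde{\imean}$, and $\psi=1-\widetilde\psi$ by continuity of the Gaussian law ($\P(Y_t=\rho(t))=0$), so the conclusions of \ref{item_increasing} invert to those of \ref{item_decreasing}. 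The main obstacle is the key lower bound $\tfrac12 v-(\rho-m)\geq\tfrac12\ivar^2$: aligning the three hypotheses so that both the $\bar\sigma_{t_0}$-bracket and the integral are simultaneously nonnegative is the one genuinely non-mechanical step, and it is precisely what pins down the constants $\tfrac14,\tfrac12$ and the constraint $|\bar\mu_{t_0}|\leq\tfrac12\bar\sigma_{t_0}^2$; the differentiation, continuity, limit, and reflection steps are routine once this is established.
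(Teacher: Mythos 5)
Your proposal is correct and follows essentially the same route as the paper: write $\psi = \Phi\big((\rho - m)/\sqrt{v}\big)$ using Gaussianity, differentiate, and sign the numerator via the three hypotheses, with strictness coming from the observation that a uniformly strict inequality in \eqref{eq_beta_neg} forces $\alpha_{n_0}(t)^2$ to stay positive. The only cosmetic differences are your three-term decomposition of the numerator with the key bound $\tfrac12 v - (\rho - m) \geq \tfrac12 \ivar^2$ (the paper estimates the same quantity directly, arriving at the equivalent lower bound $\alpha_{n_0}(t)^2\left[\tfrac12 \bar\sigma_{t_0}^2 - \left|\bar\mu_{t_0}\right|\right]$), and your derivation of part \ref{item_decreasing} from part \ref{item_increasing} by reflection, where the paper simply repeats the symmetric computation.
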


\begin{proof}
We let
\begin{equation} \label{eq_func_f}
f(t) := \frac{ (\rho(t) - \imean) - \bar \mu_{t_0} - \int_{t_0}^{t} \beta_{n_0}(s) \od s  }{\sqrt{ \ivar^2 + \bar \sigma_{t_0}^2 + \int_{t_0}^{t} \alpha_{n_0}(s)^2 \od s }}
\end{equation}
for $t \in [t_0, t_1)$. \newer{Since $Y_t^{(n_0, t_0, \xi_{t_0})}$ is a sum of two independent normally distributed random variables, one has
\begin{equation*}
Y_t^{(n_0, t_0, \xi_{t_0})} \sim \cN \left(  \imean + \bar \mu_{t_0} + \int_{t_0}^{t} \beta_{n_0}(s) \od s, \ivar^2 + \bar \sigma_{t_0}^2 + \int_{t_0}^{t} \alpha_{n_0}(s)^2 \od s\right)
\end{equation*}
for all $t \in [t_0, t_1)$. Consequently}
\newer{
\begin{equation} \label{eq_expr_psi}
\psi(t) = \P(Y^{(n_0, t_0, \xi_{t_0})}_t \leq \rho(t)) = \Phi \left( \frac{\rho(t) - \E Y^{(n_0, t_0, \xi_{t_0})}_t}{\sqrt{\variance\left(Y^{(n_0, t_0, \xi_{t_0})}_t\right)}} \right) = \Phi(f(t))
\end{equation}
for $t \in [t_0, t_1)$,
}
where $\Phi$ is the cumulative \new{distribution} function of the standard normal distribution. \newer{The continuity of the function $\psi$ follows from expressions \eqref{eq_func_f} and \eqref{eq_expr_psi} and the continuity of $\Phi$.}


\newer{Since the functions $\alpha_{n_0}$ and $\beta_{n_0}$ are continuous and $\rho \in \cC^1(\rplus)$, the function $f$ is differentiable everywhere.} We compute
\begin{align*}
f'(t) = \frac{g(t)}{2 \left( \ivar^2 + \bar \sigma_{t_0}^2 + \int_{t_0}^{t} \alpha_{n_0} (s)^2 \od s \right)^{\frac{3}{2}}}
\end{align*}
for \newer{$t \in [t_0, t_1)$}, where
\begin{align*}
g(t)  = & \, \alpha_{n_0}(t)^2 \left[ \int_{t_0}^{t} \beta_{n_0}(s) \od s + \bar \mu_{t_0} + \left( \imean - \rho(t) \right) \right]  \\
& + 2 \left[\rho'(t) - \beta_{n_0}(t) \right] \left[ \ivar^2 + \bar \sigma_{t_0}^2 + \int_{t_0}^{t} \alpha_{n_0}(s)^2 \od s \right].
\end{align*}
Since $\Phi$ is strictly increasing it suffices to study the sign of the function $g$ \newer{on $(t_0, t_1)$.}
\bigskip

{\noindent
\fbox{Proof of \ref{item_increasing}:} Since now $\rho'(t) \geq 0$ and $\rho(t) \leq \imean$, we have by \eqref{eq_beta_neg} and \eqref{eq_cond_mu} that 
\begin{align}
g(t) & \geq -\alpha_{n_0}(t)^2 \left[ \frac{1}{2} \int_{t_0}^t \alpha_{n_0}(s)^2 \od s + \newer{\left| \bar \mu_{t_0} \right|} \right] + \frac{1}{2} \alpha_{n_0}(t)^2 \left[ \int_{t_0}^t \alpha_{n_0}(s)^2 \od s + \bar \sigma_{t_0}^2 \right] \label{eq_g1} \\
& = \alpha_{n_0}(t)^2 \left[ \frac{1}{2} \bar \sigma_{t_0}^2 - \newer{\left| \bar \mu_{t_0} \right|} \right] \geq 0. \nonumber
\end{align}
If either of the inequalities in \eqref{eq_beta_neg} is strict \newer{uniformly for all $t \in [t_0, t_1)$, then one has either $\int_{t_0}^{t} \beta_{n_0}(s) \od s > -\frac{1}{2} \int_{t_0}^t \alpha_{n_0}(s)^2 \od s$ or $-\beta_{n_0}(t) > \frac{1}{4} \alpha_{n_0}(t)^2$, which makes the inequality in \eqref{eq_g1} strict, implying that} $g(t) > 0$.}

To show the remaining claim let us suppose that \eqref{eq_alpha_goes_infinity} holds. \newer{Since $\rho$ is non-decreasing, it holds $\inf_{t \in \rplus} \rho(t) = \rho(0)$, and hence}
\begin{align*}
\psi(t) & \geq \Phi \left( \frac{ \newer{-\left| \rho(0) - \imean - \bar \mu_{t_0} \right|} - \int_{t_0}^t \beta_{n_0}(s) \od s }{\sqrt{ \ivar^2 + \bar \sigma_{t_0}^2 + \int_{t_0}^t \alpha_{n_0}(s)^2 \od s }} \right) \\
& \geq \Phi \left(\frac{\newer{-\left| \rho(0) - \imean - \bar \mu_{t_0} \right|}}{\ivar} + \frac{ \frac{1}{4} \left( \inf_{u \geq t_0} \alpha_{n_0}(u)^2 \right) (t - t_0) }{\sqrt{ \ivar^2 + \bar \sigma_{t_0}^2 + \left( \sup_{u \geq t_0} \alpha_{n_0}(u)^2 \right)(t - t_0) }} \right) \to 1
\end{align*}
as $t \to \infty$ by \eqref{eq_alpha_goes_infinity} and the boundedness of $\alpha_{n_0}$.

\bigskip

{\noindent
\fbox{Proof of \ref{item_decreasing}:} This time \corr{we have} $\rho'(t) \leq 0$ and $\rho(t) \geq \imean$, hence by \eqref{eq_beta_pos} and \eqref{eq_cond_mu} it holds 
\begin{align*}
g(t) & \leq \alpha_{n_0}(t)^2 \left[ \frac{1}{2} \int_0^t \alpha_{n_0}(s)^2 \od s + \bar \mu_{t_0} \right] - \frac{1}{2} \alpha_{n_0}(t)^2 \left[ \int_{t_0}^t \alpha_{n_0}(s)^2 \od s + \bar \sigma_{t_0}^2 \right] \\
& = \alpha_{n_0}(t)^2 \left[ \bar \mu_{t_0} - \frac{1}{2}  \bar \sigma_{t_0}^2 \right] \leq 0.
\end{align*}
Again, if either of the inequalities in \eqref{eq_beta_pos} is strict \newer{uniformly for all $t \in [t_0, t_1)$}, then $g(t) < 0$.}

In the same way as above, the condition \eqref{eq_alpha_goes_infinity} together with the boundedness of $\alpha_{n_0}$ implies 
\begin{align*}
\psi(t) & \leq \Phi \left( \frac{ \newer{\left| \rho(0) - \imean - \bar \mu_{t_0} \right|} - \int_{t_0}^t \beta_{n_0}(s) \od s }{\sqrt{ \ivar^2 + \bar \sigma_{t_0}^2 + \int_{t_0}^t \alpha_{n_0}(s)^2 \od s }} \right) \\
& \leq \Phi \left(\frac{ \newer{\left| \rho(0) - \imean - \bar \mu_{t_0} \right|}}{\ivar}   -\frac{\frac{1}{4} \left( \inf_{u \geq t_0} \alpha_{n_0}(u)^2 \right) (t - t_0) }{\sqrt{ \ivar^2 + \bar \sigma_{t_0}^2 + \left( \sup_{u \geq t_0} \alpha_{n_0}(u)^2 \right)(t - t_0) }} \right) \to 0
\end{align*}
as $t \to \infty$, \newer{where we used that $\sup_{t \in \rplus} \rho(t) \leq \rho(0)$, which holds since $\rho$ is non-increasing.}

\end{proof}

\newer{
\begin{remark} \label{rem_when_other_lemma_can_be_used}
In the proof of \cref{theo_existence} below we use \cref{lemma_monotonicity} with $\xi_{t_0} = \widehat Y_{t_0}$, where $\widehat Y$ is a process that satisfies 
\begin{equation} \label{eq_gaussian_sde_special}
\widehat Y_t := F(0, x_0) + \int_0^t \sum_{n=1}^\infty \ind_{\left\{ s \in R_n \right\}} \left[ \alpha_n(s) \od B_s + \beta_n(s) \od s \right], \quad t \in [0, t_0]
\end{equation} 
for some sets $(R_n)_{n=1}^\infty \subset \cB([0, t_0])$ that form a partition of the interval $[0, t_0]$. If $\widehat{Y}$ is a solution to \eqref{eq_gaussian_sde}, then one lets $R_n := \left. \varphi \right|_{[0, t_0]} ^{-1}(I_n)$ for $n \in \N$. In this case a sufficient condition for \eqref{eq_cond_mu} to hold is to assume that \ref{hypo_x0} holds, and for all $n \in \N$ and $t \in \rplus$ one has 
\begin{equation} \label{eq_bounded_beta}
\left| \beta_n(t) \right| \leq \frac{1}{2} \alpha_n(t)^2,
\end{equation}
which is automatically satisfied under \ref{hypo_increasing}. This follows from the following computation: Since by \ref{hypo_x0} the initial value $F(0, x_0)$ in \eqref{eq_gaussian_sde} is normally distributed, we have 
\begin{equation*}
\bar \mu_{t_0} = \E \widehat Y_{t_0} = \imean + \int_0^{t_0}  \sum_{n=1}^\infty \ind_{ \left\{ s \in R_n \right\} } \beta_n(s) \od s 
\end{equation*}
and 
\begin{equation*}
\bar \sigma_{t_0}^2 = \variance(\widehat Y_{t_0}) = \ivar^2 + \int_0^{t_0} \sum_{n=1}^\infty \ind_{ \left\{ s \in R_n \right\} } \alpha_n(s)^2 \od s,
\end{equation*}
which give us the estimate 
\begin{align*}
\left| \bar \mu_{t_0} \right| & \leq \left| \imean \right| + \int_0^{t_0} \sum_{n=1}^\infty \ind_{\left\{ s \in R_n \right\}} \left| \beta_n(s) \right| \od s   \leq \frac{1}{2} \left[ \ivar^2 + \int_0^{t_0} \sum_{n=1}^\infty \ind_{ \left\{ s \in R_n \right\} } \alpha_n(s)^2 \od s \right] = \frac{1}{2} \bar \sigma_{t_0}^2
\end{align*}
by \eqref{eq_cond_mu_initial} and \eqref{eq_bounded_beta}.

\end{remark}
}

\subsection{Proof of \cref{theo_existence}}

Now we may proceed to prove \cref{theo_existence}.

\begin{proof}[Proof of \cref{theo_existence}]
\newer{In the following we assume that $\varphi(0) = \P(x_0 \leq r(0)) \in [y_0, y_1)$. In} the proof we shall see that $\varphi$ is always strictly increasing, \newer{hence in the general case one can always define new levels $(\tilde y_n)_{n=0}^\infty$ by $\tilde y_n := y_{n - m}$, where $m := \max \left\{k \geq 0 \mid \varphi(0) \geq y_k \right\}$. }

\bigskip
{\noindent
\fbox{\textbf{Existence:}} 
By \cref{prop_only_transform} it suffices to construct a solution to the SDE \eqref{eq_gaussian_sde} with the data $\xi_0 = F(0, x_0)$ and $\rho(t) = F(t, r(t))$ and then transform the solution into a solution to \eqref{eq_target_sde} using the transformation $G$ defined in \eqref{eq_def_G_initial}.
}

We define
\begin{equation*}
Y_t^1 := F(0, x_0) + \int_0^t \alpha_1(s) \od B_s + \int_0^t \beta_1 (s) \od s, \quad t \geq 0.
\end{equation*}
Using \cref{lemma_monotonicity} \ref{item_increasing}  with $t_0 = 0$ and $\xi_{t_0} = F(0, x_0)$ we get that the function $\varphi_1(t) := \P \left( Y_{t}^1 \leq \rho(t) \right)$, $t \in \rplus$, is \newer{continuous and} strictly increasing. If we define $T_1 := \inf \left\{ t > T_0 \mid \varphi_1(t) = y_1 \right\} \in (0, \infty]$, where $T_0 := 0$, then $\varphi_1(t) \in [y_0, y_1)$ if and only if $t \in [0, T_1)$, hence the process $Y^1$ solves \eqref{eq_gaussian_sde} on $[0, T_1)$.

If $T_1 = \infty$, then we let $\tmax = T_1 = \infty$ and stop. Otherwise, for an arbitrary $n > 1$ we define
\begin{equation} \label{eq_linear_n}
Y_t^n := Y_{T_{n-1}}^{n-1} + \int_{T_{n-1}}^{t \vee T_{n-1} } \alpha_n(s) \od B_s + \int_{T_{n-1}}^{t \vee T_{n-1}} \beta_n (s) \od s, \quad t \geq 0,
\end{equation}
where $T_{n - 1} = \inf \left\{ t > T_{n-2} \mid \varphi_{n-1}(t) = y_{n - 1} \right\} \in (T_{n-2}, \infty]$. Assuming that $T_{n-1} < \infty$ we apply \cref{lemma_monotonicity} \ref{item_increasing}  with $t_0 = T_{n-1}$ and $\xi_{t_0} = Y_{T_{n-1}}^{n-1}$ \newer{(which is possible by \cref{rem_when_other_lemma_can_be_used})} to obtain that
\begin{equation} \label{eq_varphi_n}
\varphi_n(t) := \P \left( Y_{t}^n \leq \rho(t) \right)
\end{equation}
is \newer{continuous and} strictly increasing on $[T_{n-1}, \infty)$, and let
\begin{equation} \label{eq_def_T_n}
T_n := \inf \left\{ t > T_{n-1} \mid \varphi_n(t) = y_n \right\} \in (T_{n-1}, \infty].
\end{equation}
Since $\varphi_n(t) \in [y_{n-1}, y_n)$ if and only if $t \in [T_{n-1}, T_n)$, the process $Y^n$ solves \eqref{eq_gaussian_sde} on $[0, T_{n})$. 

If $T_n = \infty$, then we let $\tmax = T_n = \infty$ and stop, otherwise we continue the procedure for $n + 1$. In the end we obtain an increasing sequence $0 =: T_0 < T_1 < ... < \tmax \leq \infty$, where $\tmax = \infty$ if $T_n = \infty$ for \corr{some} $n \in \N$, otherwise we let $\tmax := \lim_{n \to \infty} T_n$. We let $N := \inf \left\{ n \in \N \mid T_n = \infty \right\} \in \N \cup \left\{ \infty \right\}$ and define a process 
\begin{equation*}
Y_t := F(0, x_0) + \int_0^t \sum_{n=1}^N \ind_{\left\{ s \in [T_{n-1}, T_n) \right\}} \left[ \alpha_n(s) \od B_s + \beta_n(s) \od s \right]
\end{equation*}
for $t \in [0, \tmax)$, which, by the previous computations, solves \eqref{eq_gaussian_sde} on $[0, \tmax)$. Finally, by \cref{prop_only_transform} the process $X_t := G(t, Y_t)$ solves \eqref{eq_target_sde} on $[0, \tmax)$ and $X_t \in U_t$ for all $t \in [0, \tmax)$.

\bigskip

\newer{{\noindent {\color{black} \fbox{\textbf{Uniqueness:}}} Let us suppose that there exists a second solution $Z = (Z_t)_{t \in [0, S)}$ to \eqref{eq_gaussian_sde} with the data $\xi_0 = F(0, x_0)$ and $\rho(\cdot) = F(\cdot, r(\cdot))$ for some $S \in (0, \infty]$. 
Let $\psi(t) := \P(Z_{t} \leq \rho(t))$ for $t \in [0, S)$, and define 
\begin{equation*}
S_1 := \inf \left\{ t \in [0, S) \mid \psi(t) = y_1 \right\} \wedge S.
\end{equation*}
Since in the beginning of this proof it was assumed that
\begin{equation*}
\psi(0) = \P(\xi_0 \leq \rho(0)) = \P(F(0, x_0) \leq F(0, r(0))) = \P(x_0 \leq r(0)) < y_1,
\end{equation*}
it holds $S_1 > 0$ by the continuity of $\psi$ (the continuity is proven similarly as in the proof of \cref{lemma_monotonicity}). Furthermore, since $Y$ and $Z$ share the same initial value, it holds $\psi(0) = \varphi(0)$ and both processes use the same coefficient functions $\alpha_1$ and $\beta_1$ on $[0, T_1 \wedge S_1)$. Thus, $\varphi(t) = \psi(t)$ and $\P(Z_t = Y_t) = 1$ for all $t \in [0, T_1 \wedge S_1)$. In particular, if $S \geq S_1$, then $S_1 = T_1$.}

Next let us assume that $S > S_1$ and define 
\begin{equation*}
S_2 := \inf \left\{ t \geq 0 \mid \psi(t) = y_2 \right\} \wedge S.
\end{equation*}
We want to show that $\psi(t) \geq y_1$ for all $t \in [S_1, S_2)$. Let us assume the opposite: there is some $u_1 \in (S_1, S_2)$ such that $\psi(u_1) < y_1$. By the continuity of $\psi$ there is a $u_0 \in [S_1, u_1)$ such that $\psi(u_0) = y_1$ and $\psi(t) < y_1$ for all $t \in (u_0, u_1]$. However, by \cref{lemma_monotonicity} \ref{item_increasing} (and \cref{rem_when_other_lemma_can_be_used}) the function $\psi$ is strictly increasing on $(u_0, u_1)$, which is a contradiction.

The property $\psi(t) \geq y_1$ for $t \in [S_1, S_2)$ now implies that $Z$ uses only $\alpha_2$ and $\beta_2$ on $[S_1, S_2)$. \newer{Thus, it follows from} $\psi(S_1) = \varphi(S_1)$ \newer{that} $\psi(t) = \varphi(t)$ and $\P(Z_t = Y_t) = 1$ for $t \in [S_1, T_2 \wedge S_2)$. Especially if $S \geq S_2$, then $S_2 = T_2$.} Continuing inductively one can show that $T_n = S_n$ for all \newer{$n \in \N$ that satisfy $S_n \leq S$}, and consequently $S \leq \tmax$ and $Z_t = Y_t$ $\P$-almost surely for all $t \in [0, S)$.

It remains to show that the uniqueness also holds for the original equation \eqref{eq_target_sde}. \corr{Supposing any solution $\widetilde{X}$ to \eqref{eq_target_sde} with a lifetime $S > 0$ such that $\widetilde{X}_t \in U_t$ for $t \in [0, S)$, we observe that $\widetilde{Y} := F(t, \widetilde{X}_t)$ solves \eqref{eq_gaussian_sde} by \cref{prop_only_transform}.}
\corr{But now by the previous arguments it holds} $Y_t = \widetilde{Y}_t$ $\P$-almost surely for $t \in [0, \tmax \wedge S)$, \corr{
which in turn implies $X_t = G(t, Y_t) = G(t, \widetilde{Y}_t) = \widetilde{X}_t$ $\P$-almost surely for all $t \in [0, \tmax \wedge S)$. Furthermore, one has $S \leq \tmax$.}

\bigskip

{\noindent \fbox{\textbf{Globality:}} If $T_n = \infty$ for some $n \in \N$, then we are done, so let us assume that $T_n < \infty$ for all $n \in \N$. We let 
\begin{equation} \label{eq_def_funky_A_B}
\mathscr{A}(t) := \sum_{n=1}^{\infty} \ind_{\left\{ \varphi(t) \in [y_{n-1}, y_{n}) \right\}} \alpha_n(t), \qquad \mathscr{B}(t) := \sum_{n=1}^{\infty} \ind_{\left\{ \varphi(t) \in [y_{n-1}, y_{n}) \right\}} \beta_n(t)
\end{equation}
for $t \in [0, \tmax)$. To show that \eqref{eq_as_bounded_beta} implies $\tmax = \infty$ we apply \eqref{eq_cond_strong_diffusion}, \eqref{eq_as_bounded_beta} and \ref{hypo_r} to get the estimate
\begin{align*}
y_n = \varphi(T_n) = \Phi \left( \frac{\rho(T_n) - \imean - \int_0^{T_n} \mathscr{B}(u) \od u }{\sqrt{ \ivar^2 + \int_0^{T_n} \mathscr{A}(u)^2 \od u }} \right) \leq \Phi \left( \frac{\int_0^{T_n} \mathscr{A}(u)^{\newer{2}} \od u }{2 \sqrt{ \ivar^2 + \int_0^{T_n} \mathscr{A}(u)^2 \od u }} \right) \leq \Phi \left( \frac{C}{2 \ivar} {T_n} \right),
\end{align*}
which implies
\begin{equation*}
T_n \geq \frac{2 \ivar}{C}  \Phi^{-1}(y_n) \to \infty
\end{equation*}
as $n \to \infty$ since $\lim_{n \to \infty} y_n = 1$. 
}

\bigskip

{\noindent \fbox{\textbf{Maximality:}} Let us suppose that $\tmax < \infty$. 
We \newer{assume} that there is an $\ep > 0$ such that \eqref{eq_gaussian_sde} has a solution on $[0, \tmax + \ep)$. We define $Z_0 := F(0, x_0)$ and
\begin{equation*}
Z_n := \int_{T_{n-1}}^{T_n} \left[ \alpha_n(s) \od B_s + \beta_n(s) \od s \right], \quad n \in \N,
\end{equation*}
which are independent Gaussian random variables. We observe that we can write $Y_{T_n} = M_n$, where $\corr{M_n} := \sum_{k=0}^n Z_k$, for all $n \in \N$. By the continuity of the sample paths there is a random variable $M$ such that $M_n \to M$ almost surely.
Furthermore, this implies that the \corr{sequence of} characteristic functions of \corr{$(M_n)_{n=1}^\infty$} converges pointwise:
\begin{equation*}
\psi_{M_n}(u) := \exp\left( i u \imean - \frac{u^2 \ivar^2}{2} \right) \prod_{k=1}^n \exp \left( i u \int_{T_{k-1}}^{T_k} \beta_k(s) \od s - \frac{u^2}{2} \int_{T_{k-1}}^{T_k} \alpha_k(s)^2 \od s \right) \to \psi_M(u),
\end{equation*}
for all $u \in \R$ as $n \to \infty$, where \newer{$\psi_M$} is the characteristic function of $M$.}

Using the notation \eqref{eq_def_funky_A_B} we have
\begin{equation*}
\psi_{M_n}(u) = \exp\left[ i u \left(\imean + \int_0^{T_n} \mathscr{B}(s) \od s \right) - \frac{u^2}{2} \left( \ivar^2 + \int_0^{T_n} \mathscr{A}(s)^2 \od s \right) \right]
\end{equation*}
for all $n \in \N$ and $u \in \R$. The identity
\begin{equation*}
\varphi(T_n) = \Phi \left( \frac{\rho(T_n) - \imean -  \int_0^{T_n} \mathscr{B}(s) \od s }{\sqrt{\ivar^2 + \int_0^{T_n} \mathscr{A}(s)^2 \od s}} \right) = y_n
\end{equation*}
implies that
\begin{align} \label{eq_estimate_a_infinity}
- \imean - \int_0^{T_n} \mathscr{B}(s) \od s & = \left( \sqrt{\ivar^2 + \int_0^{T_n} \mathscr{A}(s)^2 \od s} \right) \Phi^{-1}(y_n) - \rho(T_n)  \geq \ivar \Phi^{-1}(y_n) - \imean
\end{align}
for $n \geq \min \left\{ n \in \N \mid y_n \geq \frac{1}{2} \right\}$ by \ref{hypo_r} \newer{(for smaller $n$ the quantity $\Phi^{-1}(y_n)$ is negative, so the last inequality in \eqref{eq_estimate_a_infinity} does not necessarily hold)}, hence $\int_0^{T_n} \mathscr{B}(s) \od s \to -\infty$ as $n \to \infty$ since $y_n \uparrow 1$. But as a result we get by using \ref{hypo_increasing} that
\begin{equation*}
\ivar^2 + \int_0^{T_n} \mathscr{A}(s)^2 \od s \geq \ivar^2 - 2 \int_0^{T_n}  \mathscr{B}(s) \od s \to \infty
\end{equation*}
as $n \to \infty$. Consequently,
\begin{align*}
\left| \psi_{M_n}(u) - 0 \right| = \exp \left[ -\frac{u^2}{2} \left( \corr{\ivar^2} + \int_0^{T_n} \mathscr{A}(s)^2 \od s \right) \right] \to 0
\end{align*}
\newer{for all $u \neq 0$} as $n \to \infty$, \newer{so since $\psi_M(0) = 1$ it should hold $\psi_M(u) = \ind_{\left\{u = 0 \right\}}$ for all $u \in \R$. This is a contradiction since the characteristic function of a random variable has to be continuous.} We conclude that $Y_{T_n} = M_n$ does not converge almost surely to any random variable, hence there is no solution to \eqref{eq_gaussian_sde} --- and consequently to \eqref{eq_target_sde} either --- on $[0, \tmax + \ep)$ for any $\ep > 0$.

\bigskip

{\noindent \fbox{\textbf{Bijectivity:}} 
From the construction of the process $Y$ \new{in \newer{the} "Existence" step above} \corr{one sees that $\varphi(t) = \varphi_k(t)$ for $t \in [t_{k-1}, t_k)$ and all $k \in \N$ such that $T_k < \infty$, \new{where $\varphi_k$ is defined in \eqref{eq_varphi_n}}. Since each function $\varphi_k$ was proven to be \newer{continuous and} strictly increasing, the function $\varphi$ is also \newer{continuous and} strictly increasing. Assuming that \eqref{eq_positive_alphas} holds, then by \cref{lemma_monotonicity} one has 
\begin{equation*}
\lim_{t \to \infty} \varphi_k(t) = \infty
\end{equation*}
for all $k \in \N$ with $T_k < \infty$. Consequently,
\begin{equation*}
\inf \left\{ t \in \rplus \mid \varphi(t) \geq y_n \right\} = \inf \left\{ t \in \rplus \mid \varphi_n(t) = y_n \right\} < \infty
\end{equation*}
\newer{for all $n \in \N$,} which completes the proof.
}}

\end{proof}

In the example below we demonstrate that without condition \eqref{eq_as_bounded_beta} it can occur that the solution obtained in \cref{theo_existence} only exists for a finite time. However, by choosing different levels $(y_n)_{n=1}^\infty$ we can make the solution global without changing the functions $\alpha_n$.

\begin{example} \label{ex_explosion}
Consider the linear SDE
\begin{equation} \label{eq_linear_sde_explosition}
\begin{cases}
\begin{aligned}
X_t & = x_0 + \int_0^t \left( \sum_{n=1}^{\infty} \ind_{ \left\{ \varphi(s) \in [y_{n-1}, y_n) \right\} } \newest{\sqrt{2n}} \right)  X_s \od B_s, \quad t \in \rplus, \\
\varphi(t) & = \P( X_t \leq \mathrm{median}(x_0)),
\end{aligned}
\end{cases}
\end{equation}
where $x_0 \sim \lognormal(\newer{0, 1})$. \newer{Here $\alpha_n \equiv \newest{\sqrt{2n}}$ for all $n \in \N$, $\sigma_1 \equiv 1$, $\sigma_2 \equiv 0$ and \newest{$b \equiv 0$}. We observe that} $\mathrm{median}(x_0) = \exp(\newer{0}) \newer{= 1}$, thus by using \cref{prop_only_transform} and \cref{ex_linear_SDE} we see that the transformation $F(t, x) := \log(x)$ transforms the equation \eqref{eq_linear_sde_explosition} into the SDE 
\begin{equation} \label{eq_sde_explosition}
\begin{cases}
\begin{aligned}
Y_t & = \log(x_0) + \int_0^t \sum_{n=1}^{\infty} \ind_{ \left\{ \varphi(s) \in [y_{n-1}, y_n) \right\} } \left[ \newest{\sqrt{2n}} \od B_s - n \od s \right], \quad t \in \rplus, \\
\varphi(t) & = \P( Y_t \leq \newer{0}).
\end{aligned}
\end{cases}
\end{equation}
Since now $\beta_n \equiv -n$ for all $n \in \N$, the assumption \ref{hypo_increasing} holds, and therefore by \cref{theo_existence} we know that \eqref{eq_linear_sde_explosition} has a unique strong solution defined on some maximal interval $[0, \tmax) \subseteq \rplus$.

However, the family $(\alpha_n)_{n \in \N}$ is unbounded, so the condition \eqref{eq_as_bounded_beta} does not hold. We show that whether a global solution exists depends on the choice of $(y_n)_{n=1}^\infty$. \newer{Assume a strictly increasing sequence of time points $(t_n)_{n=0}^\infty$ such that $t_0 = 0$ and}
\begin{equation} \label{eq_finite_sum}
\sum_{k=1}^{\infty} (t_{k} - t_{k-1}) k = \infty.
\end{equation}
\newest{
Let us define a process
\begin{equation*} 
\widehat Y_t := \log(x_0) + \int_0^t \sum_{n=1}^{\infty} \ind_{ \left\{ s \in [t_{n-1}, t_n) \right\} } \left[  \newest{\sqrt{2n}} \od B_s - n \od s \right], \quad t \in \rplus,
\end{equation*}
and let $\widehat \varphi(t) := \P(\widehat Y_t \leq 0)$. With the help of the functions 
\begin{align*}
& f(t) := \int_{0}^t \left( \sum_{n=1}^\infty \ind_{\left\{ s \in [t_{n-1}, t_n) \right\}} n \right) \od s, \hspace{-3.0cm} && t \in \rplus, \\
& g(x) := \frac{x}{\sqrt{1 + 2x}}, && x \in \rplus,
\end{align*}
one can write
\begin{equation*}
\widehat \varphi(t) = \Phi\left(
\frac{\int_{0}^t (\sum_{n=1}^\infty \ind_{\left\{ s \in [t_{n-1}, t_n) \right\}} n) \od s}{ \sqrt{1 + 2 \int_{0}^t (\sum_{n=1}^\infty \ind_{\left\{ s \in [t_{n-1}, t_n) \right\}} n) \od s} }\right) = \Phi((g \circ f)(t))
\end{equation*}
for $t \in \rplus$. The functions $f$ and $g$ have the following properties:
\begin{enumerate}[label=\rm{(\roman*)}]
    \item $f$ and $g$ are strictly increasing,
    \item $g(x) \to \infty$ as $x \to \infty$, and 
    \item for all $n \in \N$ it holds
    \begin{equation*}
    f(t_n) = \sum_{k=1}^\infty \int_{0}^{t_n} \ind_{\left\{ s \in [t_{k-1}, t_k) \right\}} k \od s = \sum_{k=1}^n \int_{t_{k-1}}^{t_k} k \od s = \sum_{k=1}^n (t_k - t_{k-1}) k \to \infty
    \end{equation*}
    as $n \to \infty$ by \eqref{eq_finite_sum}
\end{enumerate}
Recalling that $\Phi : \R \to (0, 1)$ is a strictly increasing and continuous bijection, these properties imply that the composition $\widehat \varphi = \Phi \circ g \circ f$ is strictly increasing. \changed{Moreover, it also follows from the properties above that} $\widehat \varphi(t_n) \to 1$ as $n \to \infty$. Consequently, the sequence 
\begin{equation*}
y_n := 
\begin{cases}
\begin{aligned}
& 0, \quad && n = 0, \\
& \widehat \varphi(t_n), \quad && n \geq 1,
\end{aligned}
\end{cases}
\end{equation*}
satisfies $0 = y_0 < y_1 < y_2 < ... \uparrow 1$.

Using the monotonicity of $\widehat \varphi$ one sees that $\widehat \varphi(t) \in [y_{n-1}, y_n)$ if and only if $t \in [t_{n-1}, t_n)$, which in fact implies that $\widehat Y$ solves \eqref{eq_sde_explosition}}. Furthermore, 
\begin{equation*}
\inf \left\{ t \in \rplus \mid \widehat \varphi(t) = y_n \right\} = t_n
\end{equation*}
for all $n \in \N$, hence $\tmax = \lim_{n \to \infty} t_n$. Thus, whether a global solution exists depends on the choice of $(t_n)_{n=0}^\infty$.
For example, if one chooses $t_n = \frac{n}{n + 1}$, then 
\begin{equation*}
\sum_{n=1}^\infty (t_{n} - t_{n-1})n = \sum_{n=1}^\infty \frac{1}{n + 1} \newest{= \infty},
\end{equation*}
hence \eqref{eq_finite_sum} holds, and $\tmax = \lim_{n \to \infty} \frac{n}{n + 1} = 1 < \infty$. To obtain a global solution one can choose for instance $t_n = n$: the condition \eqref{eq_finite_sum} clearly holds, and now $\tmax = \lim_{n \to \infty} n = \infty$.

\end{example}

\section{Equations with multiple solutions or no (global) solution} \label{sec_counter_examples}

Violating the assumption \ref{hypo_increasing} has certain side effects: we might lose the uniqueness or even the existence of a (global) solution. \newest{For simplicity, we study these effects for the transformed equation \eqref{eq_gaussian_sde}.}

Supposing that $I_n = [y_{n-1}, y_n)$ for all $n \in \N$, then the possible "bad" behavior is always localized in a neighborhood of some "critical" level $y_{n}$ by the continuity of $\varphi$. Inspired by this we therefore restrict ourselves to study equations of the type 
\newest{
\begin{equation} \label{eq_sde_one_level}
\begin{cases}
\begin{aligned}
Y_t & = \xi_{t_0} + \int_{t_0}^t \ind_{\left\{ \varphi(s) < y \right\}} \left[ \alpha_1(s) \od B_s + \beta_1(s) \od s \right] \\
& \hspace{0.9cm} + \int_{t_0}^t \ind_{\left\{ \varphi(s) \geq y \right\}} \left[ \alpha_2(s) \od B_s + \beta_2(s) \od s \right], \quad t \in [t_0, \infty),   \\
\varphi(t) & = \P(Y_t \leq \imean),
\end{aligned}
\end{cases}
\end{equation}
where:
\begin{enumerate}[label=\rm{(\roman*)}]
    \item $t_0 \in \rplus$ and $y \in (0, 1)$,
    \item $\alpha_1, \alpha_2 : \rplus \to \rplus$ and $\beta_1, \beta_2 : \rplus \to \R$ are continuous and bounded, and 
    \item $\xi_{t_0}$ is an $\cF_{t_0}$-measurable random variable satisfying the following: there exist parameters $(\imean, \bar \mu_{t_0}, \ivar^2, \bar \sigma_{t_0}^2) \in \R \times \R \times (0, \infty) \times [0, \infty)$ such that 
\begin{equation*}
\left| \bar \mu_{t_0} \right| \leq \frac{1}{2} \bar \sigma_{t_0}^2,
\end{equation*}
$\xi_{t_0} \sim \cN(\imean + \bar \mu_{t_0}, \ivar^2 + \bar \sigma_{t_0}^2)$ and $\P(\xi_{t_0} \leq \imean) = y$.
\end{enumerate}
In equation \eqref{eq_sde_one_level} one has $I_1 = [0, y)$, $I_2 = [y, \infty)$ and $I_n = \emptyset$ for $n > 2$.

\smallskip
}

First we give conditions that ensure that equation \eqref{eq_sde_one_level} has exactly two solutions.
\begin{proposition} \label{prop_many_solutions}
If 
\begin{align}  
\frac{1}{4} \alpha_{1}(t)^2 \leq & \, \beta_{1}(t) \leq \frac{1}{2} \alpha_{1}(t)^2, \label{eq_beta_pos_1} \\
-\frac{1}{2} \alpha_{2}(t)^2 \leq & \, \beta_{2}(t) \leq -\frac{1}{4} \alpha_{2}(t)^2 \label{eq_beta_neg_2}
\end{align}
holds for all $t \in [t_0, \infty)$ and in both lines at least one of the inequalities is strict \newer{uniformly in $t$}, then the equation \eqref{eq_sde_one_level} has exactly two solutions defined on $[t_0, \infty)$.
\end{proposition}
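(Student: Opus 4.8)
The plan is to exhibit two explicit solutions and then show that no others exist. The observation I would use throughout is that for \emph{any} solution $(Y,\varphi)$ of \eqref{eq_sde_one_level} the coefficients actually driving $Y$ are the deterministic bounded measurable functions $\mathscr A(s) := \alpha_1(s)\ind_{\{\varphi(s)<y\}} + \alpha_2(s)\ind_{\{\varphi(s)\geq y\}}$ and $\mathscr B(s)$ defined analogously; hence $Y_t = \xi_{t_0} + \int_{t_0}^t \mathscr A\,\od B_s + \int_{t_0}^t \mathscr B\,\od s$ is the sum of $\xi_{t_0}$ and an independent Wiener integral with deterministic integrand, so $Y_t$ is Gaussian and $\varphi(t)=\P(Y_t\leq\imean)=\Phi(\cdot)$ is continuous in $t$. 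I would first record two monotonicity facts from \cref{lemma_monotonicity} applied with the constant threshold $\rho\equiv\imean$ (which is simultaneously non-decreasing and non-increasing, with $\sup\rho=\inf\rho=\imean$): on any interval on which $\varphi\geq y$ only the second coefficient acts, so by part \ref{item_increasing} and \eqref{eq_beta_neg_2} the function $\varphi$ is strictly increasing there; on any interval on which $\varphi<y$ only the first coefficient acts, so by part \ref{item_decreasing} and \eqref{eq_beta_pos_1} the function $\varphi$ is strictly decreasing there. In both applications the required law of the restart value follows from \cref{rem_when_other_lemma_can_be_used}: since both coefficient pairs satisfy $\left|\beta_i\right|\leq\tfrac12\alpha_i^2$ and $\left|\bar\mu_{t_0}\right|\leq\tfrac12\bar\sigma_{t_0}^2$ by hypothesis, every $Y_a$ with $a\geq t_0$ is $\cN(\imean+\bar\mu_a,\ivar^2+\bar\sigma_a^2)$ with $\left|\bar\mu_a\right|\leq\tfrac12\bar\sigma_a^2$.

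For existence I would take the process using only the second coefficient, $Y^{(2)}_t:=\xi_{t_0}+\int_{t_0}^t[\alpha_2\,\od B_s+\beta_2\,\od s]$. By the increasing fact its distribution function $\varphi^{(2)}$ is strictly increasing with $\varphi^{(2)}(t_0)=\P(\xi_{t_0}\leq\imean)=y$, hence $\varphi^{(2)}\geq y$ everywhere; this makes the choice of coefficient self-consistent, so $(Y^{(2)},\varphi^{(2)})$ is a solution. Symmetrically, the process $Y^{(1)}$ using only the first coefficient has $\varphi^{(1)}$ strictly decreasing from $y$, so $\varphi^{(1)}<y$ on $(t_0,\infty)$, and since the value of the integrand at the single instant $t_0$ is irrelevant this is also a solution. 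The two are distinct because $\varphi^{(1)}(t)<y<\varphi^{(2)}(t)$ for $t>t_0$, so the one-dimensional laws of $Y^{(1)}_t$ and $Y^{(2)}_t$ differ.

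The heart of the argument, and the step I expect to be the main obstacle, is showing these are the only solutions, i.e.\ that a solution cannot oscillate across the level $y$. Here I would use that $\{\varphi>y\}$ and $\{\varphi<y\}$ are open in $(t_0,\infty)$, hence disjoint unions of open intervals, and prove that every such maximal interval is unbounded to the right. Indeed, if $(a,b)$ is a maximal interval of $\{\varphi>y\}$ with $b<\infty$, then continuity forces $\varphi(a)=\varphi(b)=y$ (with $\varphi(t_0)=y$ covering the case $a=t_0$), while $\varphi\geq y$ on $[a,b]$ means the second coefficient acts there; the increasing fact on $[a,b]$ then gives $\varphi(b)>\varphi(a)$, contradicting $\varphi(a)=\varphi(b)=y$. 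The same reasoning with the decreasing fact rules out a bounded maximal interval of $\{\varphi<y\}$. Since two disjoint intervals cannot both be unbounded to the right, at most one of $\{\varphi>y\}$, $\{\varphi<y\}$ is nonempty, and if nonempty it is a single interval $(c,\infty)$.

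It remains to pin down $c=t_0$ and to dispose of the degenerate case. If $\{\varphi>y\}=(c,\infty)$, then $\{\varphi<y\}=\emptyset$, so $\varphi\geq y$ on $(t_0,\infty)$; combined with $\{\varphi>y\}=(c,\infty)$ this gives $\varphi\equiv y$ on $(t_0,c]$, whence the increasing fact on $[t_0,c]$ forces $c=t_0$. Thus $\varphi>y$ on all of $(t_0,\infty)$, which selects the second coefficient throughout and identifies the solution with $(Y^{(2)},\varphi^{(2)})$; symmetrically $\{\varphi<y\}=(t_0,\infty)$ yields $(Y^{(1)},\varphi^{(1)})$. Finally, the remaining possibility $\varphi\equiv y$ on $(t_0,\infty)$ is impossible, since then the second coefficient acts on all of $[t_0,\infty)$ and the increasing fact forces $\varphi$ to be strictly increasing. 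Therefore \eqref{eq_sde_one_level} has exactly the two solutions constructed above.
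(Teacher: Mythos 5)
Your proof is correct and follows essentially the same route as the paper: you exhibit the same two candidate solutions $Y^{(1)}$ and $Y^{(2)}$, verify them with \cref{lemma_monotonicity} (justifying the restart laws via \cref{rem_when_other_lemma_can_be_used}), and rule out any third solution by showing $\varphi$ cannot cross the level $y$. The only difference is organizational: where the paper runs a case analysis on a first crossing time of a hypothetical third solution, you decompose the open sets $\left\{ \varphi > y \right\}$ and $\left\{ \varphi < y \right\}$ into maximal intervals and show each must be unbounded to the right --- the same monotonicity-plus-continuity contradiction, packaged differently.
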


\begin{proof}
We claim that the only solutions are 
\begin{equation*}
Y_t^i := \newest{\xi_{t_0}} + \int_{t_0}^t \alpha_i(s) \od B_s + \int_{t_0}^t \beta_i(s) \od s, \quad t \in [t_0, \infty)
\end{equation*}
for $i \in \left\{1, 2\right\}$. We denote $\varphi_i(t) := \P(Y_t^i \leq \imean)$ for all $t \in [t_0, \infty)$ and $i \in \left\{ 1, 2\right\}$. By \cref{lemma_monotonicity} we see that $\varphi_1$ is strictly decreasing and $\varphi_2$ is strictly increasing on $[t_0, \infty)$. Since also $\varphi_1(t_0) = y$, the process $Y^1$ solves \eqref{eq_sde_one_level}. Moreover, by observing that we also have $\varphi_2(t_0) = y$ and the set $\left\{ t \geq 0 \mid \varphi_2(t) \leq y \right\} = \left\{ 0 \right\}$ has Lebesgue measure zero, we see that the process $Y^2$ is another solution to \eqref{eq_sde_one_level}.

Let us suppose there is another solution $Y^3$ on $[t_0, T)$ for some $T \in (t_0, \infty]$, and let us define $\varphi_3(t) := \P(Y_{t}^3 \leq \imean)$ for $t \in [t_0, T)$. \newer{Then there exist $t_0 < t_1 < t_2 < T$ such that either $\varphi_3(t_1) < y$ and $\varphi_3(t_2) \geq y$, or $\varphi_3(t_1) \geq y$ and $\varphi_3(t_2) < y$, because the function $\varphi_3$ cannot agree with $\varphi_1$ or $\varphi_2$ everywhere on $[t_0, T)$. Let us consider both cases separately:  
\begin{itemize}
    \item \underline{\textbf{Case 1:} $\varphi_3(t_1) < y$ and $\varphi_3(t_2) \geq y$:}
    
    By the continuity of $\varphi_3$ there is an $s \in (t_1, t_2]$ such that $\varphi_3(s) = y$ and $\varphi_3(t) < y$ for all $t \in (t_1, s)$. However, by \eqref{eq_beta_pos_1} and \cref{lemma_monotonicity} \ref{item_decreasing} this makes the function $\varphi_3$ strictly decreasing on $(t_1, s)$, which is a contradiction.

    \item \underline{\textbf{Case 2:} $\varphi_3(t_1) \geq y$ and $\varphi_3(t_2) < y$:}

    Let us first suppose that $\varphi_3(t_1) > y$. Now there is an $s \in (t_1, t_2)$ such that $\varphi_3(s) = y$ and $\varphi_3(t) > y$ for all $t \in (t_1, s)$. But \eqref{eq_beta_neg_2} and \cref{lemma_monotonicity} \ref{item_increasing} imply that $\varphi_3$ is strictly increasing on $(t_1, s)$, which is again a contradiction.

    Next let us assume that $\varphi_3(t_1) = y$. If there is a $u \in (t_0, t_1)$ such that $\varphi_3(u) < y$, then this corresponds with \textbf{Case 1} by choosing \newest{$\tilde t_1 = u$ and $\tilde t_2 = t_1$}, so we can assume that $\varphi_3(t) \geq y$ for all $t \in [t_0, t_1]$. However, by \eqref{eq_beta_neg_2} and \cref{lemma_monotonicity} \ref{item_increasing} the function $\varphi_3$ is now strictly increasing on $(t_0, t_1)$, which implies $y = \varphi_3(t_0) < \varphi_3(t_1) = y$, which gives the remaining contradiction. 

\end{itemize}}
We conclude that $Y^3 = Y^i$ almost surely on $[t_0, T)$ for some $i \in \left\{1, 2\right\}$, hence there are exactly two solutions to \eqref{eq_sde_one_level}.
\end{proof}

\begin{remark} \label{rem_inf_solutions}
If neither of the inequalities in \eqref{eq_beta_pos_1} is strict, then there can exist infinitely many solutions: Assume that
\begin{equation} \label{eq_beta_const_1}
0 = \alpha_{\newest{2}}(t) = \beta_{\newest{2}}(t)
\end{equation}
holds for all $t \in [t_0, \infty)$. Then \eqref{eq_beta_neg_2} is still satisfied, but there is an equality on both sides. Let us suppose that the other assumptions of \cref{prop_many_solutions} hold. Then the equation \eqref{eq_sde_one_level} has infinitely many solutions: for each $\newer{w \in [0, \infty)}$ the process 
\begin{equation*}
Y_t^w := \newest{\xi_{t_0}} + \int_{t_0}^t \ind_{ \left\{ s \in [w, \infty) \right\} } \left[ \alpha_{\newest{1}}(s) \od B_s + \beta_{\newest{1}}(s) \od s \right], \quad t \in [t_0, \infty)
\end{equation*}
is a solution \newer{to \eqref{eq_sde_one_level}}. This \newest{is due to the following observation: For all $t \in [t_0 + w, \infty)$ it holds $\varphi_w(t) := \P(Y_{t}^w \leq \imean) = y$}, and on $[t_0 + w, \infty)$ the function $\varphi_w$ is strictly decreasing by \cref{lemma_monotonicity}, \newest{hence $\varphi_w(t) < y$ for all $t > t_0 + w$ by continuity}.
\end{remark}

In the following proposition we show that without the condition \ref{hypo_increasing} the equation \eqref{eq_sde_one_level} can also fail to have a solution due to an oscillating behavior of $\varphi$. This is comparable to \cite[Proposition 3.3]{paper1}, where a similar effect is studied for $\varphi(t) = \E \left| X_t - z \right|^p$. \newest{This effect is attained by swapping the functions $(\alpha_1, \beta_1)$ and $(\alpha_2, \beta_2)$ in conditions \eqref{eq_beta_pos_1} and \eqref{eq_beta_neg_2}, which inverts the direction of the monotonicity of $\varphi$ on the sets $[0, y)$ and $[y, \infty)$, implying that whenever the function $\varphi$ leaves the level $y$, then it is immediately driven back.}

\begin{proposition} \label{theo_no_solution}
If there is a $\delta > 0$ such that 
\begin{align}  
-\frac{1}{2} \alpha_{1}(t)^2 \leq & \, \beta_{1}(t) \leq -\frac{1}{4} \alpha_{1}(t)^2, \label{eq_beta_neg_1} \\
\frac{1}{4} \alpha_{2}(t)^2 \leq & \, \beta_{2}(t) \leq \frac{1}{2} \alpha_{2}(t)^2 \label{eq_beta_pos_2}
\end{align}
holds for all $t \in [t_0, t_0 + \delta)$ and in both lines at least one of the inequalities is strict \newer{uniformly in $t$}, then there is no $\ep \in (0, \delta)$ such that the equation \eqref{eq_sde_one_level} has a solution on the interval $[t_0, t_0 + \ep)$.
\end{proposition}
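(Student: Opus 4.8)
The plan is to argue by contradiction: fix an arbitrary $\ep \in (0, \delta)$, suppose \eqref{eq_sde_one_level} admits a solution $Y$ on $[t_0, t_0 + \ep)$, and show that the sign conditions \eqref{eq_beta_neg_1} and \eqref{eq_beta_pos_2} force $\varphi(\cdot) = \P(Y_\cdot \leq \imean)$ to be pinned at the level $y$ while simultaneously being strictly monotone, which is impossible. First I would record the structural facts. Since $\varphi$ is by definition a deterministic function of time, the regime indicators $\ind_{\{\varphi(s) < y\}}$ and $\ind_{\{\varphi(s) \geq y\}}$ are deterministic, so on $[t_0, t_0+\ep)$ the process $Y$ is driven by deterministic coefficients and is therefore Gaussian; exactly as in the proof of \cref{lemma_monotonicity} this gives $\varphi = \Phi \circ f$ for a continuous $f$, so $\varphi$ is continuous with $\varphi(t_0) = \P(\xi_{t_0} \leq \imean) = y$. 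I would also note that both regimes satisfy $|\beta_i(t)| \leq \tfrac12 \alpha_i(t)^2$, so by the propagation computation in \cref{rem_when_other_lemma_can_be_used} the accumulated parameters $\bar \mu_s, \bar \sigma_s^2$ at any intermediate time $s$ again satisfy $|\bar \mu_s| \leq \tfrac12 \bar \sigma_s^2$; this is what lets me restart \cref{lemma_monotonicity} from an arbitrary time $s$ with initial datum $Y_s$.

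The heart of the argument consists of two ``no escape'' claims, each obtained by tracing back to the last crossing of the level $y$. For the first, suppose some $t_2 \in (t_0, t_0+\ep)$ has $\varphi(t_2) > y$ and set $s := \sup \{t \in [t_0, t_2] : \varphi(t) = y \}$; continuity and the intermediate value theorem give $\varphi(s) = y$ and $\varphi > y$ on $(s, t_2]$. On $[s, t_2]$ regime $2$ is active (apart from the single point $s$, which is Lebesgue-null), so $Y$ agrees there with the process started from $Y_s$ at time $s$ using $(\alpha_2, \beta_2)$; by \eqref{eq_beta_pos_2} and \cref{lemma_monotonicity} \ref{item_decreasing} that process has strictly decreasing $\varphi$, whence $\varphi(t) < \varphi(s) = y$ on $(s, t_2]$, contradicting $\varphi > y$. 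Symmetrically, assuming $\varphi(t_2) < y$ and tracing back to the last crossing $s$, regime $1$ is active on $(s, t_2]$, and \eqref{eq_beta_neg_1} together with \cref{lemma_monotonicity} \ref{item_increasing} forces $\varphi$ strictly increasing, i.e.\ $\varphi > y$ on $(s, t_2]$, again a contradiction. Hence $\varphi(t) \leq y$ and $\varphi(t) \geq y$ for every $t \in (t_0, t_0+\ep)$, so $\varphi \equiv y$ on $[t_0, t_0+\ep)$.

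To finish, observe that $\varphi \equiv y$ means $\varphi(s) \geq y$ for all $s$, so regime $2$ is active throughout and $Y$ coincides on $[t_0, t_0+\ep)$ with the process started from $\xi_{t_0}$ using $(\alpha_2, \beta_2)$; by \eqref{eq_beta_pos_2} and \cref{lemma_monotonicity} \ref{item_decreasing} (using $\P(\xi_{t_0} \leq \imean) = y$ and $|\bar \mu_{t_0}| \leq \tfrac12 \bar \sigma_{t_0}^2$) the resulting $\varphi$ is strictly decreasing, contradicting $\varphi \equiv y$. This completes the contradiction, so no such $\ep$ exists. I expect the main obstacle to be the careful bookkeeping that legitimizes restarting \cref{lemma_monotonicity} at the random-but-Gaussian value $Y_s$ at an arbitrary crossing time $s$: one must verify both that $Y_s$ is normally distributed with the correct variance growth and that the constraint \eqref{eq_cond_mu} is preserved, which is exactly the role of \cref{rem_when_other_lemma_can_be_used}; the measure-zero mismatch of the regime at the single point $s$ also needs a brief (routine) remark so that $Y$ genuinely agrees with the single-regime comparison process on $(s, t_2]$.
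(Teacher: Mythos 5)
Your proof is correct and follows essentially the same route as the paper's: a contradiction argument splitting into the same three cases (excursion above $y$, excursion below $y$, and $\varphi$ pinned at $y$), each killed by \cref{lemma_monotonicity} \ref{item_decreasing}, \ref{item_increasing}, and \ref{item_decreasing} respectively, with \cref{rem_when_other_lemma_can_be_used} justifying the restart at the last crossing time. The only difference is organizational (you first establish $\varphi \equiv y$ and then contradict it, while the paper treats the three cases for a fixed $t_1$ directly), and your explicit bookkeeping of the preserved condition $\left| \bar \mu_s \right| \leq \frac{1}{2} \bar \sigma_s^2$ is a point the paper leaves implicit.
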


\begin{proof}

\newest{Suppose} that there is an $\ep \in (0, \delta)$ such that a solution to \eqref{eq_sde_one_level} exists on $[t_0, t_0 + \ep)$. Let us choose any $t_1 \in (t_0, t_0 + \ep)$. Then there are three different possible cases that can occur:
\begin{itemize}
    \item \newer{\underline{\textbf{Case 1:} $\varphi(t_1) > y$:}}
    
    By the continuity of $\varphi$ there is an $s \in [t_0, t_1)$ such that $\varphi(s) = y$ and $\varphi(t) > y$ for all $t \in (s, t_1]$. However, by \eqref{eq_beta_pos_2} and \cref{lemma_monotonicity} \ref{item_decreasing} the function $\varphi$ is now strictly decreasing on $(s, t_1)$, which is a contradiction since \newer{$\varphi$ is continuous and} we assumed that \newer{$\varphi(s) < \varphi(t_1)$ and $s < t_1$.}
    
    \item \newer{\underline{\textbf{Case 2:} $\varphi(t_1) < y$:}}
    
    In this case there is an $s \in [t_0, t_1)$ such that $\varphi(s) = y$ and $\varphi(t) < y$ for all $t \in (s, t_1]$, but by \eqref{eq_beta_neg_1} and \cref{lemma_monotonicity} \ref{item_increasing} this implies that the function $\varphi$ is strictly increasing on $(s, t_1)$, thus giving another contradiction.
    
    \item \newer{\underline{\textbf{Case 3:} $\varphi(t) = y$ for all $t \in [t_0, t_1]$:}}
    
    Since now \newer{$\varphi(t) \in [y, \infty)$} for all $t \in [t_0, t_1]$, \newer{the function $\alpha_2$ is used by the diffusion coefficient on $[t_0, t_1]$, hence} \eqref{eq_beta_pos_2} and \cref{lemma_monotonicity} \ref{item_decreasing} imply that $\varphi$ is strictly decreasing on $[t_0, t_1]$, which is \newer{again} a contradiction.

\end{itemize}
We conclude that there is no solution to \eqref{eq_sde_one_level} on any interval $[t_0, t_0 + \ep)$.

\end{proof}

To illustrate a situation when an oscillation effect described in \cref{theo_no_solution} can occur we consider the following simple example.
\begin{example}
\changed{
Consider the equation 
\begin{equation} \label{eq_funny_example_1} 
\begin{cases}
\begin{aligned}
X_t & = x_0 + \int_0^t \left[ \ind_{\left\{ \varphi(s) < \newer{\frac{1}{2}} \right\}} \newer{\alpha_1} + \ind_{\left\{ \varphi(s) \geq \newer{\frac{1}{2}} \right\}} \newer{\alpha_2} \right]  X_s \od B_s + \int_0^t b X_s \od s, \quad t \in \rplus, \\
\varphi(t) & = \P(X_s \leq \textrm{median}(x_0)),
\end{aligned}
\end{cases}
\end{equation}
where $x_0 \sim \newer{\lognormal(0, 1)}$ and \newer{$\alpha_1, \alpha_2$ and $b$} \newer{are constants such that
\begin{equation} \label{eq_funny_conditions}
\begin{aligned}
& 0 < b < \frac{1}{4} \alpha_1^2, \\
& 0 < \frac{3}{4} \alpha_2^2 < b < \alpha_2^2.
\end{aligned}
\end{equation}}
Here $\sigma_1 \equiv 1, \sigma_2 \equiv 0$ and $b(t, x) = b x$. We recall that since $x_0 \sim \newer{\lognormal(0, 1)}$, it holds $\textrm{median}(x_0) = \exp(0) = 1$. By \cref{ex_linear_SDE} the transformation $F(t, x) = \log(x)$ transforms SDE \eqref{eq_funny_example_1} into 
\begin{equation} \label{eq_funny_example_2}
\begin{cases}
\begin{aligned}
Y_t & = \log(x_0) + \int_{0}^t \ind_{\left\{ \varphi(s) < \frac{1}{2} \right\}} \left[ \alpha_1(s) \od B_s + \beta_1 \od s \right] \\
& \hspace{1.55cm} + \int_{0}^t \ind_{\left\{ \varphi(s) \geq \frac{1}{2} \right\}} \left[ \alpha_2(s) \od B_s + \beta_2 \od s \right], \quad t \in [0, \infty),   \\
\varphi(t) & = \P(Y_t \leq 0),
\end{aligned}
\end{cases}
\end{equation}
where
\begin{equation*}
\beta_n = b - \frac{1}{2} \alpha_n^2
\end{equation*}
for $n \in \left\{1, 2\right\}$. Since $\log(x_0) \sim \cN(0, 1)$, we have $\P(\log(x_0) \leq 0) = \frac{1}{2}$. We see that equation \eqref{eq_funny_example_2} is a special case of \eqref{eq_sde_one_level} with $y = \frac{1}{2}$, $t_0 = 0$, \changedagain{$\bar \mu_{t_0} = \bar \sigma_{t_0} = 0$}, $\xi_{\changedagain{t_0}} = \log(x_0)$, $\imean = 0$ and $\ivar^2 = 1$. Furthermore, condition \eqref{eq_funny_conditions} now implies
\begin{equation*}
\begin{aligned}
-\frac{1}{2} \alpha_1^2 & < b - \frac{1}{2} \alpha_1^2 = \beta_1 < -\frac{1}{4} \alpha_1^2, \\
\frac{1}{4} \alpha_2^2 & < b - \frac{1}{2} \alpha_2^2 = \beta_2 < \frac{1}{2} \alpha_2^2,
\end{aligned}
\end{equation*}
which means that conditions \eqref{eq_beta_neg_1} and \eqref{eq_beta_pos_2} hold. We conclude by \cref{theo_no_solution} that equation \eqref{eq_funny_example_2} has no solution on $[0, \delta)$ for any $\delta > 0$.
}
\end{example}

\small

\end{document}